\documentclass[10pt,a4paper]{article}
\usepackage[utf8x]{inputenc}
\usepackage{ucs}
\usepackage{amsmath}
\usepackage{amsfonts}
\usepackage{amssymb}
\usepackage{amsthm}
\usepackage{multirow}
\usepackage{float}
\usepackage{enumerate}
\linespread{1.1}
\author{Marek Ha{\l}enda\footnote{
Institute of Mathematics, Faculty of Mathematics, Physics and Informatics, University of Gda\'nsk, 
80-308 Gda\'nsk, Poland.\newline E-mail: mhalenda@mat.ug.edu.pl}}
\title{Complex Hantzsche-Wendt manifolds}
\date{}
\theoremstyle{plain}
\newtheorem{thm}{Theorem}
\newtheorem{lem}{Lemma}
\newtheorem{cor}{Corollary}
\newtheorem{prop}{Proposition}
\theoremstyle{definition}
\newtheorem{defn}{Definition}
\newtheorem{remark}{Remark}
\newtheorem{ex}{Example}
\DeclareMathOperator{\Q}{\mathbb{Q}}
\DeclareMathOperator{\R}{\mathbb{R}}
\DeclareMathOperator{\Z}{\mathbb{Z}}

\DeclareMathOperator{\C}{\mathbb{C}}
\begin{document}
\maketitle
\begin{abstract}
Complex Hantzsche-Wendt manifolds are flat K\"ahler manifolds with holonomy group $\Z_2^{n-1}\subset SU(n)$. They are important examples of Calabi-Yau manifolds of abelian type. In this paper we describe them as quotients of a product of elliptic curves by a finite group $\tilde{G}$. This will allow us to classify all possible integral holonomy representations and give an algorithm classifying their diffeomorphism types.

\vspace{1em}
\noindent\textbf{Keywords:} complex Hantzsche-Wendt manifold, flat manifold, Bieberbach group, Calabi-Yau manifold.
\end{abstract}

\section{Introduction}
The class of complex Hantzsche-Wendt manifolds was defined in \cite{kahlerflat} as a complex analogue to the class of Hantzsche-Wendt manifolds. The latter is a class of flat orientable $n$-dimensional manifolds with holonomy group $\Z_2^{n-1}$, a generalisation of the original Hantzsche-Wendt manifold which is the unique $3$-dimensional manifold with holonomy group $\Z_2^2$. Any Hantzsche-Wendt manifold is odd-dimensional, thus it cannot be a complex manifold. However, we use the following definition: 
\begin{defn}
A flat K\"ahler manifold of complex dimension $n$ is called a complex Hantzsche-Wendt manifold (abbreviated as CHW-manifold) if the holonomy group of $M$ is isomorphic to $\Z_2^{n-1}$ and contained in the $SU(n)$ group.
\end{defn}

Let us briefly recall some basic properties of complex Hantzsche-Wendt manifolds:
\begin{itemize}
\item The holonomy group of a CHW manifold is the subgroup of diagonal integer matrices in $SU(n)$.
\item The complex dimension of CHW manifold is an odd number. Moreover, it is not hard to construct an example of CHW manifold in any odd dimension  (using results on the existence of Hantzsche-Wendt manifold, see \cite[Lemma 4.2]{kahlerflat}). 
\item The Hodge numbers of a CHW manifold $M$ are given by:
$$h^{p,q}(M)=\begin{cases}
\binom{n}{p}&\textrm{if }p=q\textrm{ or }p+q=n\\
0&\textrm{otherwise}
\end{cases}$$
(for the calculation see \cite[Theorem 4.3]{kahlerflat}).
\item Any CHW manifold is an algebraic variety (which follows from the equation $h^{2,0}=0$).
\end{itemize}

All three-dimensional CHW manifolds were classified (up to diffeomorphism) in \cite{kahlerflat}. However they were also found independently by different authors. In particular they were listed in \cite{oguiso}  as examples of Calabi-Yau threefolds of Type A (more detailed description, which agree with our results, can be found in \cite{kanazawa}). We point out that there are many inequivalent definitions of Calabi-Yau manifolds in literature and that the CHW manifolds do not satisfy all of them. However, it is interesting to observe, that the Hodge numbers of CHW manifolds satisfy equations $h^{n,0}=h^{0,0}=1$ and $h^{p,0}=0$ for $p=1,2,\ldots,n-1$, which is typical to the Calabi-Yau manifolds.

In this paper we describe the structure of CHW manifolds of any complex dimension $n$ as orbit spaces of a free action of a finite group on the product of $n$ elliptic curves. Details of that description will be given in Theorem \ref{structurethm}. In particular for $n=3$ it agrees with \cite[Proposition~6.2, pt 1.]{kanazawa} and \cite[Theorem~5.5]{lange}. Next we show in Corollary \ref{holonomystruct} that the integral holonomy representation decomposes as direct product of two subrepresentations, of which the first is diagonal and the second is isomorphic to the diagonal representation over $\Q$ but not necessarily over $\Z$. We classify representations of the second type in Proposition \ref{prop-sub_n} using equivalence classes of some subgroups of $\Z_2^n$ (using different terminology, isometry classes of binary linear codes of length $\geq 2$). After that, for any CHW manifold $M$ we define two matrices $\Phi(M)$ and $\Psi(M)$ that encode the objects from which $M$ is build up (see Definition \ref{defpsi} and Definition \ref{defphi}). We examine which pairs of matrices $\Phi$, $\Psi$ correspond to a CHW manifold (Theorem \ref{torsionfreethm}) and we define an equivalence relation (see Definition \ref{equivpairs}) such that pairs $(\Phi,\Psi)$ and $(\Phi',\Psi')$ are equivalent if and only if corresponding manifolds are diffeomorphic (Theorem \ref{equivthm}). This way we obtain an algorithm of classification of fundamental groups of CHW manifolds, which is analogue to the algorithm of classification of Hantzsche-Wendt manifolds given in \cite{miatello-rosetti}. At the end of this paper we present results obtained using our algorithm in complex dimension 5. There are exactly 8616 diffeomorphism classes of CHW fivefolds. We were not able to determine exact number of CHW manifolds in dimension seven due to hardware restrictions. However, by analysis of our method we find a lower bound (equal to 48321790784) on the number of CHW 7-folds with diagonal holonomy representation. It is interesting to compare this results with the results on number of Hantzsche-Wendt manifolds: there are 2 five-dimensional and 62 seven-dimensional HW manifolds (\cite{miatello-rosetti,mr7}).

Results of this paper were published in the author's PhD Thesis (\cite{phd}, in Polish language only).

Let us review some basic facts on flat manifolds, which will be needed further. A flat manifold $M$ is a compact Riemannian manifold with the Riemann curvature tensor equal to zero. It is a quotient space $\R^n/\Gamma$, where $\Gamma=\pi_1(M)$ is a torsion-free, cocompact and discrete subgroup of the group $E(n)=O(n)\ltimes \R^n$ of isometries of $\R^n$. Those groups are called Bieberbach groups (torsion-free crystallographic groups). They are an essential tool in studying flat manifolds since they determine them up to diffeomorphism. The algebraic structure of crystallographic groups is well understood: by first theorem of Bieberbach any crystallographic subgroup $\Gamma$ of $E(n)$ fits into short exact sequence:
$$0\longrightarrow \Lambda \longrightarrow \Gamma \longrightarrow H\longrightarrow 1,$$
where $\Lambda\subset\R^n$ is a maximal abelian subgroup of $\Gamma$ and is a lattice in $\R^n$ (isomorphic to $\Z^n$) and $H=\Gamma/\Lambda$ is a finite group called holonomy group of $\Gamma$. If $\Gamma$ is Bieberbach group, then $H$ is the holonomy of manifold $\R^n/\Gamma$. The short exact sequence above allows to define faithful integral representation of the group $H$ (which will be reffered to as the integral holonomy representation). The second Bieberbach theorem asserts that two crystallographic groups $\Gamma_1$, $\Gamma_2$ are isomorphic if and only if they are conjugated in the group $A(n)=GL_n(\R)\ltimes \R^n$. It follows, that if $\Gamma_1$ and $\Gamma_2$ are isomorphic, then the images of integral holonomy representations of $\Gamma_1$ and $\Gamma_2$ are conjugated in $GL_n(\Z)$ (though representations are not necessarily isomorphic).

Flat K\"ahler manifolds are characterised by a number of equivalent conditions (\cite[Theorem 3.1 and Proposition 3.2]{johnson}). In particular, $\Gamma$ is fundamental group of a flat K\"ahler manifold of complex dimension $n$ if and only if it is discrete, cocompact, torsion-free subgroup of $U(n)\ltimes \C^n$. 

\section{Structure theorem}
We are going to describe the structure of complex Hantzsche-Wendt manifolds. Assume that $M$ is such manifold. Recall that $M$ is a quotient of the form $M=\C^n/\Gamma$, where $\Gamma$ is a Bieberbach group. We have short exact sequence:
$$0\longrightarrow\Lambda\longrightarrow\Gamma\longrightarrow\Z_2^{n-1}\longrightarrow1$$
where $\Lambda\subset \Gamma$ is the maximal abelian subgroup of $\Gamma$ isomorphic to $\Z^{2n}$. Since $\Lambda$ is a lattice in $\C^n$, then $T=\C^n/\Lambda$ is a complex torus and $M$ is a quotient $M=T/\Z_2^{n-1}$ (where the group $\Z_2^{n-1}$ acts biholomorphically). We shall show that $T$ is isogenous to the product of elliptic curves. Consequently $M$ is a quotient $M=\hat{T}/\hat{G}$ where $\hat{T}=E_1\times E_2\times\ldots\times E_n$ is a product of some elliptic curves and $\hat{G}$ is an extension of $\Z_2^{n-1}$.

Let us begin with some notation. Let $\varrho:\Z_2^{n-1}\to \mathrm{SL}(V)$ be a faithful representation, where $\dim_{\C}V=n$. The image of $\varrho$ (up to conjugation) is the subgroup of diagonal integer matrices of $SU(n)$ (see \cite[proof of Lemma 4.2]{kahlerflat}). Thus we have decomposition $V=V_1\oplus V_2\oplus\ldots\oplus V_n$. Moreover we can find generators $g_1,g_2\ldots, g_{n}$ of $\Z_2^{n-1}$ such that $V_i=\ker(g_i-\mathrm{id})$ for all $i\in\{1,\ldots,n\}$.

\begin{lem}
Let $\Lambda\subset V$ be a lattice (of rank $2n$) and $\Lambda_i=V_i\cap \Lambda$. Then $E_i=V_i/\Lambda_i$ are elliptic curves for $i=1,2,\ldots,n$.\label{lem1}
\end{lem}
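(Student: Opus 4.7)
The plan is to verify that $\Lambda_i$ is a rank-two lattice in $V_i$. Since $V_i$ is one-dimensional over $\C$ (hence two-dimensional over $\R$), this will immediately give that $E_i=V_i/\Lambda_i$ is a compact one-dimensional complex torus, i.e.\ an elliptic curve. Discreteness of $\Lambda_i=\Lambda\cap V_i$ is automatic: it is the intersection of the discrete set $\Lambda$ with the closed real subspace $V_i$. So the only substantive point is to produce two $\R$-linearly independent vectors inside $\Lambda_i$.

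To do this I would exploit the holonomy action on $\Lambda$. Since $\Lambda$ is the translation subgroup of the Bieberbach group $\Gamma$, it is normal in $\Gamma$ and stable under the integral holonomy representation, so in particular $g_i\Lambda=\Lambda$. From the description of $\varrho$ as the diagonal $\pm1$ subgroup of $SU(n)$, recalled just before the lemma, each $g_i$ has $+1$ as a simple eigenvalue on $V_i$ and $-1$ on every $V_j$ with $j\neq i$. Therefore the operator $p_i=\mathrm{id}+g_i$ equals twice the projection of $V$ onto $V_i$ along $\bigoplus_{j\neq i}V_j$; in particular $p_i(V)=V_i$ and $p_i(\Lambda)\subseteq\Lambda\cap V_i=\Lambda_i$. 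Because $\Lambda$ has real rank $2n$ and spans $V$ over $\R$, its image $p_i(\Lambda)$ spans $V_i$ over $\R$ and hence contains two $\R$-linearly independent elements. Combined with discreteness, this forces $\Lambda_i$ to be a full lattice in $V_i$, which completes the proof.

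I do not anticipate a serious obstacle; the only thing to be careful about is that $g_i$ indeed has a \emph{single} $+1$-eigenvalue, so that $p_i$ picks out $V_i$ alone. This is guaranteed by the explicit form of the holonomy representation: a decomposition $V=V_1\oplus\cdots\oplus V_n$ into $n$ pieces inside an $n$-dimensional space forces each $V_i$ to be a one-dimensional eigenline, so the averaging trick lands exactly in $V_i$ and nowhere else.
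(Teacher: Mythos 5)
Your proof is correct and takes essentially the same route as the paper: both rely on the operator $1+g_i$ (twice the projection onto $V_i$), note that it maps $\Lambda$ into $\Lambda_i$ while its image of $\Lambda$ spans $V_i$ over $\R$, and conclude that $\Lambda_i$ is a full rank-two lattice in $V_i$. The only cosmetic difference is that you make the discreteness of $\Lambda_i$ and the projection interpretation explicit, which the paper leaves implicit.
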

\begin{proof}
Let us consider maps $1+g_i:V\to V$ where $1$ is the identity map and $1\leq i\leq n$. It is easy to see that for all $i$ we have $\mathrm{Im}(1+g_i)=V_i$ and  $(1+g_i)(\Lambda)\subset\Lambda$. Take $v\in V_i$. Since $\Lambda$ spans $V$ we have $v=\displaystyle\sum_{l=1}^{2n} a_l\lambda_l$ for some elements $\lambda_1,\ldots,\lambda_{2n}\in\Lambda$ and $a_1,\ldots,a_{2n}\in\R$. On the other hand, $v\in V_i$ implies $(1+g_i)(v)=2v$. Thus we get $v=\displaystyle\sum_{l=1}^{2n}\frac{a_l}{2}(1+g_i)(\lambda_l)$. This means that elements $(1+g_i)(\lambda_l)$ for $l=1,2,\ldots,2n$ spans $V_i$, in other words $(1+g_i)(\Lambda)$ is a lattice in $V_i$. Since $(1+g_i)(\Lambda)\subset \Lambda_i$, we get our result.
\end{proof}
\begin{lem}
Let $T=V/\Lambda$ and $E_i=V_i/\Lambda_i$ as in previous lemma. Let us define a map $\mu: \bigoplus_{i=1}^n E_i\to T$ by: $$\mu(t_1,t_2\ldots,\ t_n)=t_1+t_2+\ldots+t_n$$
Then $\mu$ is an isogeny. Moreover elements $(t_1,t_2,\ldots,t_n)\in\ker\mu$ are 2-torsion points which cannot have exactly one non-zero entry $t_i$.\label{lem2}
\end{lem}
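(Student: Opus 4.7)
The plan splits the assertion into three parts — that $\mu$ is an isogeny, that any kernel element is 2-torsion, and that no kernel element has exactly one nonzero coordinate — all of which follow from one structural observation about the $g_i$-action. Because each $V_j$ is one-dimensional and the image of $\varrho$ consists of diagonal $\pm 1$ matrices in $SU(n)$, each $g_i$ acts on each $V_j$ by a scalar $\pm 1$. The hypothesis $V_i=\ker(g_i-\mathrm{id})$ together with $\dim_{\C} V_j=1$ then forces $g_i$ to act as $+1$ on $V_i$ and as $-1$ on every $V_j$ with $j\neq i$: otherwise the $+1$-eigenspace of $g_i$ would strictly contain $V_i$. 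Consequently $(1+g_i)$ acts on $V$ as twice the projection onto $V_i$ along the decomposition $V=\bigoplus_j V_j$, and this is what drives the rest of the argument.

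For the isogeny statement, I would note that $\dim_{\C}\bigl(\bigoplus_j E_j\bigr)=n=\dim_{\C}T$, so it suffices to verify surjectivity; and at the level of universal covers, the map $V_1\oplus\cdots\oplus V_n\to V$ sending $(v_1,\ldots,v_n)$ to $v_1+\cdots+v_n$ is simply the direct-sum identification, hence surjective. Thus $\mu$ is a surjective homomorphism of complex tori of equal dimension, i.e.\ an isogeny.

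For the kernel description, lift $(t_1,\ldots,t_n)\in\ker\mu$ to $v_i\in V_i$, so that $v:=v_1+\cdots+v_n\in\Lambda$. Applying the $\Lambda$-preserving operator $(1+g_i)$ kills every $v_j$ with $j\neq i$ and sends $v_i$ to $2v_i$; therefore $2v_i\in\Lambda\cap V_i=\Lambda_i$, and $t_i$ is 2-torsion in $E_i$. For the last claim, if $t_j=0$ for every $j\neq i$ one may take the lifts so that $v_j\in\Lambda_j\subset\Lambda$; then $v_i=v-\sum_{j\neq i}v_j\in\Lambda\cap V_i=\Lambda_i$, which forces $t_i=0$, contradicting the assumption that exactly one coordinate is nonzero.

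The argument has no really delicate step; it is essentially bookkeeping once the $(-1)$-action of $g_i$ on $V_{j\neq i}$ has been read off, and then applying the averaging operator $(1+g_i)$ to a lift of a kernel element. The only place one can stumble is in extracting that sign from the hypothesis — forgetting the one-dimensionality of $V_j$ would force a detour through the defining relation $g_1g_2\cdots g_n=1$ (and the oddness of $n$) to recover the same conclusion.
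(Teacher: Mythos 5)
Your proof is correct and follows essentially the same route as the paper: surjectivity from $V=V_1\oplus\cdots\oplus V_n$, and the operator $1+g_i$ (which preserves $\Lambda$ and acts as twice the projection onto $V_i$) to handle the kernel. The only cosmetic difference is that the paper deduces $2t_i=0$ by noting $(1+g_i)$ sends a kernel element to one with a single possibly nonzero entry and then invokes the single-entry claim, while you argue directly with lifts in $\Lambda$; both amount to the same computation.
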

\begin{proof}
To be more precise, if $v_i\in V_i$ for $i=1,\ldots,n$, then $\mu(v_1+\Lambda_1,\ldots,v_n+\Lambda_n) = v_1+\ldots+v_n+\Lambda$. Clearly, since $V=V_1\oplus\ldots\oplus V_n$, then $\mu$ is an epimorphism.

Assume that $x=(0,\ldots,0,t,0,\ldots,0)\in\ker\mu$. If $v\in V_i$ such that $t=v+\Lambda_i$ for some $i$, then from definition of $\mu$ we have $v\in\Lambda$. But now $v\in V_i\cap\Lambda$ and $t=0$.

To show that any element $(t_1,\ldots,t_n)$ of the kernel of $\mu$ is a 2-torsion point, let us observe that for any $i=1\ldots,n$ we have $((1+g_i)t_i,\ldots,(1+g_i)t_n)=(0,\ldots,0,2t_i,0,\ldots,0)\in\ker\mu$. Thus $2t_i=0$.
\end{proof}

Now we are ready to prove our structure theorem for CHW manifolds.

\begin{thm}
Let $n$ be an odd number. A manifold $M$ is a $n$-dimensional CHW manifold if and only if there exist:
\begin{itemize}
\item elliptic curves $E_1,\ldots,E_n$,
\item non-zero 2-torsion points $t_i\in E_i$, $i=1,2,\ldots,n$,
\item a subgroup $\displaystyle W\subset\langle t_1,\ldots,t_n\rangle\subset \bigoplus_{i=1}^n E_i$ such that $t_i\not\in W$ for $i=1,2,\ldots,n$,
\item elements $\displaystyle(x_i^1,\ldots,x_i^i,\ldots,x_i^n)\in\bigoplus_{i=1}^n E_i$, where $x_i^i$ is a non-zero 2-torsion point and $x_i^i\neq t_i$,
\end{itemize}
which satisfy the following conditions:
\begin{enumerate}[i)]
\item for all $i\neq j$ elements $x_{ij}=(x_{ij}^1,\ldots,x_{ij}^n)\in W$ where each entry $x_{ij}^k$ is defined as:
$$x_{ij}^k=\begin{cases}0&k=i\textrm{ or }k=j\\2(x_i^k-x_j^k)&k\not\in\{i,j\}\end{cases}$$\label{strthm-cond1}
\item for all subsets $I=\{i_1,i_2,\ldots,i_m\}\subsetneq\{1,2,\ldots, n\}$ such that $m$ is odd and for any $(y_1,\ldots,y_n)\in W$ $\exists\ j\in I$ such that $x_I^j\neq y_j$, where
$$x_I^j=\begin{cases}\sum_{k=1}^m(-1)^{k-1}x_{i_k}^j&j\not\in I\\\sum_{k=1}^p(-1)^{k-1}x_{i_k}^j-\sum_{k=p+1}^m(-1)^{k-1}x_{i_k}^j&j=i_p\in I\end{cases},$$\label{strthm-cond2}
\item for any $j\in\{1,2,\ldots,n\}$ equality $x_n^j=x^j_{\{1,2,\ldots,n-1\}}$ holds,\label{strthm-cond3}
\end{enumerate}
for which $M$ is of the form:
$$M=E_1\oplus E_2\oplus\ldots\oplus E_n/\hat{G}$$
Here $\hat{G}=\langle \hat{f}_i,t_w\ :\ 1\leq i\leq n, w\in W\rangle$ where maps $\hat{f}_i$ are given by equations $\hat{f}_i(z_1,\ldots,z_n)=(-z_1+x_i^1,\ldots,z_i+x_i^i,\ldots,-z_n+x_i^n)$ and $t_w$ are translations of $\bigoplus_{i=1}^n E_i$ by all elements  $w\in W$.
\label{structurethm}
\end{thm}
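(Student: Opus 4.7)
The plan is to establish both implications by making explicit the identification $M \cong \bigoplus_{i=1}^n E_i / \hat{G}$ and translating the defining properties of a Bieberbach group into the stated numerical conditions. Suppose first that $M = V/\Gamma$ is an $n$-dimensional CHW manifold, where $\Gamma$ has maximal lattice $\Lambda$ and holonomy $H = \Z_2^{n-1}\subset SU(n)$ acting via the decomposition $V = V_1\oplus\cdots\oplus V_n$. Lemma \ref{lem1} provides the elliptic curves $E_i = V_i/\Lambda_i$, and Lemma \ref{lem2} yields the isogeny $\mu\colon \bigoplus_i E_i\to T = V/\Lambda$, whose kernel $W$ consists of $2$-torsion points with at least two nonzero entries. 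Since the sublattice $\bigoplus_i\Lambda_i\subset\Lambda$ is $\Gamma$-invariant (each generator of $H$ preserves the decomposition $V = \bigoplus V_j$ and therefore the intersections $\Lambda_j$), I would set $\hat{G} = \Gamma/\bigoplus_i\Lambda_i$, obtaining $M = \bigoplus_i E_i/\hat{G}$ together with a short exact sequence $1\to W\to\hat{G}\to H\to 1$ in which $W$ acts by translations and $H$ acts via lifts $\hat{f}_i$ of its generators $g_i$.

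Next I would extract the parameters and derive the conditions. Each $\hat{f}_i$ has the stated linear part and some translation $(x_i^1,\ldots,x_i^n)$; the relation $\hat{f}_i^2\in W$ combined with Lemma \ref{lem2} forces $x_i^i$ to be a non-zero $2$-torsion element of $E_i$. The points $t_i$ are recovered by observing that the coordinate projection $W\to E_i[2]$ has image of order at most $2$, so one can pick a generator $t_i$ of that image (extending by any non-zero $2$-torsion point when the image is trivial); the inclusion $W\subset\langle t_1,\ldots,t_n\rangle$ and the exclusion $t_i\notin W$ then follow, the latter from Lemma \ref{lem2}, and $x_i^i\ne t_i$ from fixed-point freeness of $\hat{f}_i$ on the factor $E_i$. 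Condition (\ref{strthm-cond1}) arises from the commutator identity $\hat{f}_i\hat{f}_j = \hat{f}_j\hat{f}_i\cdot t_{x_{ij}}$, whose translation part must lie in $W$ because $g_i, g_j$ commute in $H$. Condition (\ref{strthm-cond3}) reflects the relation $g_1\cdots g_n = 1$ in $H$, available precisely because $n$ odd makes each $g_i$ have determinant $+1$; comparing the translations of $\hat{f}_n$ and $\hat{f}_1\cdots\hat{f}_{n-1}$, whose linear parts already agree, yields the alternating-sum formula. Finally, condition (\ref{strthm-cond2}) is torsion-freeness of $\Gamma$: a non-trivial element whose image in $H$ is $\prod_k g_{i_k}$ with $I = \{i_1,\ldots,i_m\}\subsetneq\{1,\ldots,n\}$ and $m$ odd has the form $\hat{f}_{i_1}\cdots\hat{f}_{i_m}t_w$, whose linear part fixes $V_j$ for $j\in I$; the fixed-point equations on those factors reduce exactly to the stated inequality $x_I^j\ne y_j$.

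Conversely, from data $(E_i, t_i, W, \{x_i^j\})$ satisfying (\ref{strthm-cond1})--(\ref{strthm-cond3}), I would verify that the maps $\hat{f}_i$ and $t_w$ generate a well-defined subgroup $\hat{G}$ of biholomorphisms of $\bigoplus_i E_i$ acting freely. Closure uses $\hat{f}_i^2 = \mathrm{id}$ and condition (\ref{strthm-cond1}); condition (\ref{strthm-cond3}) trims the naive $2^n|W|$ candidates to the correct order $2^{n-1}|W|$ by making $\hat{f}_n$ redundant; condition (\ref{strthm-cond2}) yields freeness. The quotient is then a flat Kähler manifold of complex dimension $n$, and unwinding the lifting presents its fundamental group as a Bieberbach subgroup of $U(n)\ltimes\C^n$ with holonomy $\Z_2^{n-1}\subset SU(n)$. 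I expect the main obstacle to be the derivation of the alternating-sum formula for $x_I^j$ and the corresponding equivalence between (\ref{strthm-cond2}) and torsion-freeness, which requires tracking the sign changes produced as successive $\hat{f}_{i_k}$'s conjugate earlier translations and splitting cleanly into the cases $j\in I$ and $j\notin I$. A secondary subtlety is producing suitable $t_i$'s in the forward direction, i.e.\ showing that the projection of $W$ onto each $E_i[2]$ has order at most two, which encodes how $\Lambda$ sits inside $\bigoplus_i\Lambda_i$ under the holonomy action.
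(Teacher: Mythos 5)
Your proposal follows essentially the paper's own route: Lemmas \ref{lem1} and \ref{lem2} supply the curves $E_i$ and the isogeny $\mu$ with $W=\ker\mu$; conditions \ref{strthm-cond1})--\ref{strthm-cond3}) are read off from $\hat f_i^2=\mathrm{id}$, from the commutator $[\hat f_i,\hat f_j]$ being a translation in $W$, from $\hat f_n=\hat f_1\cdots\hat f_{n-1}$, and from fixed-point freeness of the elements $t_w\hat f_{i_1}\cdots\hat f_{i_m}$ with $m$ odd and $I\subsetneq\{1,\dots,n\}$; the converse is the same finiteness-plus-freeness verification. Writing $\hat G=\Gamma/\bigoplus_i\Lambda_i$ is only a cosmetic repackaging of the paper's passage through $T=V/\Lambda$.

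The one step you leave open is exactly the one you flag at the end, and the justification you gesture at would not suffice. That the projection of $W$ onto the $2$-torsion group $E_i(2)$ has order at most two is \emph{not} a consequence of how $\Lambda$ sits inside $\tfrac12\bigoplus_i\Lambda_i$ under the holonomy action: for a general invariant lattice with $\Lambda\cap V_i=\Lambda_i$ this projection can be all of $E_i(2)\simeq\Z_2^2$ (e.g.\ adjoin $\tfrac12(\lambda_1^1+\lambda_2^1)$ and $\tfrac12(\lambda_1^2+\lambda_3^2)$ to the diagonal lattice). It is a consequence of torsion-freeness: apply the fixed-point-freeness argument with $I=\{i\}$ and arbitrary $w\in W$ (the paper's condition \ref{strthm-cond2}) in this special case) to conclude that no element of $W$ has $i$-th coordinate equal to $x_i^i$; since the image of the projection is a subgroup of the Klein four-group $E_i(2)$ omitting the nonzero element $x_i^i$, it has order at most $2$. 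The same observation yields $x_i^i\neq 0$ (take $w=0$) and $x_i^i\neq t_i$, while $t_i\notin W$ comes from Lemma \ref{lem2}, as you say. Two smaller imprecisions in the same spirit: $\hat f_i^2\in W$ together with Lemma \ref{lem2} gives only $2x_i^i=0$, not $x_i^i\neq 0$, which again needs freeness; and since an arbitrary lift of $g_n$ has translation vector determined only modulo $W$, the exact equality in condition \ref{strthm-cond3}) should be arranged by \emph{defining} $x_n^j$ via the alternating sum (equivalently taking $\hat f_n:=\hat f_1\cdots\hat f_{n-1}$), which is how the paper treats the case $i=n$.
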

\begin{proof}
First observe that $\hat{G}$ is finite. Since $W$ consists of 2-torsion points, then any translation $t_w$ by an element of $W$ is an involution commuting with maps $\hat{f}_i$ for $i=1,\ldots,n$. Moreover, since $x_i^i$ is a 2-torsion point, $\hat{f}_i$ also is an involution. Condition iii) ensures that $\hat{f}_n=\hat{f}_1\hat{f}_2\ldots\hat{f}_{n-1}$ and from condition i) any commutator $[\hat{f}_i,\hat{f}_j]$ is a translation $t_w$ for some $w\in W$. Thus $\langle t_w\ :\ w\in W\rangle\simeq\Z_2^d$ and $\hat{G}$ is a central extension of this group by a group $\Z_2^{n-1}$. 

Now we shall show that the orbit space $\bigoplus_{i=1}^n E_i/\hat{G}$ is a manifold. It is enough to show that elements of $\hat{G}$ have not fixed points. Let $\hat{g}=t_w\hat{f}_{i_1}\hat{f}_{i_2}\ldots\hat{f}_{i_k}$ be an arbitrary element of $\hat{G}$. Using relations in $\hat{G}$ we can assume that $i_1<i_2<\ldots<i_k$ and that $k$ is odd. Denote the set $\{i_1,\ldots,i_k\}$ by $I$. Let also $y_i$ denote the coordinates of $w$, i.e. $w=(y_1,y_2,\ldots,y_n)$. For any $z=(z_1,\ldots,z_n)$ we have: 
$$\hat{g}(z)=((-1)^{\alpha_1}z_1+x_I^1+y_1,\ldots,(-1)^{\alpha_n}z_n+x_I^n+y_n),$$
where $\alpha_i=0$ if $i\in I$ and $\alpha_i=1$ if $i\not\in I$. If $\hat{g}$ had a fixed point $z$, then for all $i\in I$ we would have $x_I^i+y_i=0$. However, we assumed the contrary in the condition ii).

From the definition of $M$ it is clear now, that $M$ is a complex flat manifold. It is also clear that the holonomy group of $M$ is isomorphic to the group generated by linear parts of the maps $\hat{f_i}$ (that is to the group $\Z_2^{n-1}$) and that this group consists of matrices of determinant one. Thus $M$ is a CHW manifold. 

Now assume that $M$ is a CHW manifold. Let $M=V/\Gamma$, where $\Gamma=\pi_1(M)$ and let $\Lambda\subset\Gamma$ be a maximal lattice. Denote $T=V/\Lambda$. From the definition of CHW manifold we have decomposition $V=V_1\oplus V_2\oplus\ldots\oplus V_n$ which allows us to define elliptic curves $E_i$ as quotients $V_i/\Lambda_i$ where $\Lambda_i=\Lambda\cap V_i$ (Lemma \ref{lem1}). We also know from Lemma \ref{lem2} that the kernel of the isogeny $\mu:\bigoplus_{i=1}^nE_i\to T$ given by $\mu(z_1,z_2,\ldots,z_n)=\sum_{i=1}^nz_i$ consists of 2-torsion points. This kernel will coincide with the group $W$. However, at this moment we only see that it is a subgroup of group of 2-torsion points which is isomorphic to $\Z_2^{2n}$.

We can choose $n-1$ elements $(g_1,x_1),\ldots,(g_{n-1},x_{n-1})$ of the group $\Gamma$ where the linear parts $g_i$ are defined by diagonal matrices $$\mathrm{diag}(-1,\ldots,-1,1,-1,\ldots,-1)$$ (where the only one $1$ entry is on $i$-th place) in a basis corresponding to the decomposition of $V$. Then we define $(x_i^1,\ldots,x_i^n)$ as an arbitrarily chosen element of $\mu^{-1}(x_i+\Lambda)$. In case $i=n$ we use condition iii) to define elements $x_n^j$, then $\mu(x_n^1,\ldots,x_n^n)=x_n$ where $(g_n,x_n)=(g_1,x_1)\ldots(g_{n-1},x_{n-1})$.

For any $i$, map $g_i$ yields an automorphism $\tilde{g}_i$ of the torus $T$. Similarly we have biholomorphic maps $f_i:T\to T$ given by $f_i(v+\Lambda)=\tilde{g}_i(v+\Lambda)+\tilde{x_i}$ where $\tilde{x}_i=x_i+\Lambda$. Then group $\langle f_i\ :\ 1\leq i\leq n\rangle$ is isomorphic to $\Z_2^{n-1}$. Thus for all $i$ we have $f_i^2=\mathrm{id}$ and hence $\tilde{g}_i(\tilde{x}_i)+\tilde{x}_i=0$. Using  $\tilde{x}_i=x_i^1+\cdots+x_i^n$ we get $2x_i^i=0$. Observe that from Lemma \ref{lem2} this equality holds also in $E_i$. In similar way from relation $f_if_j=f_jf_i$ we get $(\mathrm{id}-\tilde{g}_i)(\tilde{x}_j)=(\mathrm{id}-\tilde{g}_j)(\tilde{x}_i)$. This yields:
$$2\sum_{k\neq i,j} (x_i^k-x_j^k)=0$$
Hence $x_{ij}\in\ker\mu$, where $x_{ij}$ is an element defined as in condition i).

Next, since $\Gamma$ is torsion-free, then the biholomorphic maps $f_{i}$ have no fixed points (if any of them had a fixed point, then it would be easy to find an element of $\Gamma$ which does not act freely). A map $f:T\to T$ given by equation $f(z)=g(z)+x$ (where $g$ is an automorphism of $T$) has no fixed points if and only if $x\not\in \mathrm{im}(\mathrm{id}-g)$. Observe that any element of the group generated by automorphisms $\tilde{g}_i$ is of the form $\tilde{g}_I=\tilde{g}_{i_1}\tilde{g}_{i_2}\ldots \tilde{g}_{i_k}$, where set $I=\{i_1,i_2,\ldots,i_k\}$ is odd (this follows from relation $\tilde{g}_n=\tilde{g_1}\ldots\tilde{g}_n$). If $z=\mu(z_1,\ldots,z_n)$ then $(\mathrm{id}-\tilde{g}_I)(z)=\sum_{i\not\in I}2z_i$. Hence we get $\tilde{x}_I\not\in \mu(\bigoplus_{i\not\in I}E_i)$. This is equivalent to the condition ii) from the definition of elements $x_I^j$ (because $\mu(x_I^1,\ldots,x_I^n)=\tilde{x}_I$).

Applying above condition to the special case $I=\{i\}$ we get, that there is no element $(y_1,\ldots,y_n)\in\ker\mu$ such that $y_i=x_i^i$. Thus $x_i^i\neq 0$. It follows also that for any $i$ and for any two elements $(y_1,\ldots,y_n)$ and $(y_1',\ldots,y_n')$ of $\ker\mu$ if both $i$-th entries $y_i$ and $y_i'$ are non-zero, then they are equal. This allows us to define elements $t_i\in E_i$ and shows that $W=\ker\mu$ is a subgroup of a group isomorphic to $\Z_2^{n-1}$.

To finish the proof, observe that directly from the definition of $\hat{G}$ we get $M=V/\Gamma=T/\Lambda=T_1\times\cdots\times T_n/\hat{G}$.
\end{proof}

\section{Integral holonomy representation}
It is known that Hantzsche-Wendt manifolds have diagonal integral holonomy (\cite[Theorem 3.1]{generalizedHW}). This cannot be true for complex Hantzsche-Wendt manifolds. Indeed, let $n$ be an odd number and $\varrho_{diag}$ be a faithful representation of $\Z_2^{n-1}$ in the $SL_n(\Z)$ such that its image consists of diagonal matrices. We take any representation $\varrho$ of $\Z_2^{n-1}$ such that it is isomorphic to $\varrho_{diag}$ over $\Q$ but not over $\Z$. Then we are able to construct Hantzsche-Wendt manifold with integral holonomy representation $\varrho\oplus\varrho_{diag}$. We shall show that the structure theorem implies that any integral holonomy representation of CHW manifold is of that form. We will denote by $L_{diag}$ a module corresponding to the representation $\varrho_{diag}$.
\begin{cor}
Let $M=V/\Gamma$ be a CHW manifold of dimension $n$ and $\Lambda\subset\Gamma$ the maximal lattice. Then $\Lambda=L_1\oplus L_2$ as a $\Z_2^{n-1}$-module, where $L_1\simeq L_{diag}$ and $L_1\otimes_{\Z}\Q=L_2\otimes_{\Z}\Q$.\label{holonomystruct}
\end{cor}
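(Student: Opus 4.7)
The plan is to read off an explicit presentation of $\Lambda$ from Theorem \ref{structurethm} and build the decomposition by hand. By that theorem, the torus $T = V/\Lambda$ fits into an isogeny $\mu : \bigoplus_{i=1}^n E_i \to T$ whose kernel $W$ is a 2-torsion subgroup contained in $\langle t_1, \ldots, t_n\rangle$; on lattices this means that $\Lambda$ is generated inside $V$ by $\bigoplus_i \Lambda_i$ together with lifts $w_1, \ldots, w_d \in V$ of a chosen $\Z/2$-basis of $W$. Since each $t_i$ is a nonzero 2-torsion point of $E_i$, I pick $e_i \in \Lambda_i$ primitive with $e_i/2 \equiv t_i \pmod{\Lambda_i}$ and extend to a $\Z$-basis $\{e_i, f_i\}$ of $\Lambda_i$; the lifts can then be written in the standard form $w_k = \tfrac{1}{2}\sum_{i \in S_k} e_i$ for subsets $S_k \subset \{1, \ldots, n\}$.

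With this notation in place I set
$$L_1 := \bigoplus_{i=1}^n \Z f_i, \qquad L_2 := \bigoplus_{i=1}^n \Z e_i + \sum_{k=1}^d \Z w_k.$$
The module $L_1$ is manifestly $\Z_2^{n-1}$-invariant, since each $\Z f_i$ is the $\chi_i$-eigenline in $\Lambda_i$, and by construction it is isomorphic to $L_{diag}$ as a $\Z_2^{n-1}$-module. For $L_2$ one checks the same property by a direct computation: using $g_j(e_i) = \pm e_i$, one finds $g_j(w_k) = -w_k$ if $j \notin S_k$ and $g_j(w_k) = e_j - w_k$ if $j \in S_k$, both of which lie in $L_2$.

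Every generator $e_i, f_i, w_k$ of $\Lambda$ lies in $L_1 + L_2$, so the sum equals $\Lambda$. The intersection is trivial because after tensoring with $\Q$ one has $L_1 \otimes \Q = \bigoplus_i \Q f_i$ and $L_2 \otimes \Q = \bigoplus_i \Q e_i$ (every $w_k$ being a $\Q$-combination of the $e_i$), and these are complementary $n$-planes inside $V = \bigoplus_i V_i$. Thus $\Lambda = L_1 \oplus L_2$; as a subgroup of a free abelian group, $L_2$ is free of rank $n$ with the same $\Q$-character as $L_{diag}$, yielding $L_1 \otimes \Q \simeq L_2 \otimes \Q$. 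The only real subtlety is the alignment of bases in the first paragraph: once the $\{e_i, f_i\}$ are chosen so that each $w_k$ involves only the $e_i$'s, invariance of $L_2$ and directness of the sum become routine. The conceptual content of the corollary, which prepares the analysis of the next section, is that $L_2$ is typically not isomorphic to $L_{diag}$ over $\Z$: the lifts $w_k$ produce a genuinely twisted integral form of the diagonal rational representation whenever $W \neq 0$.
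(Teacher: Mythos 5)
Your proof is correct and follows essentially the same route as the paper's: pick a basis $\{e_i,f_i\}$ of each $\Lambda_i$ with $\tfrac12 e_i$ representing $t_i$ (the paper's $\lambda_i^2,\lambda_i^1$), take $L_1$ spanned by the $f_i$ and $L_2$ generated by the $e_i$ together with lifts of $W$, which involve only the $e_i$ directions because $W\subset\langle t_1,\ldots,t_n\rangle$. The paper's argument is just terser, leaving implicit the invariance of $L_2$ and the directness of the sum that you verify explicitly.
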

\begin{proof}
Let $\Lambda'\subset\Lambda$ be a sublattice defined as $\Lambda'=\bigoplus_{i=1}^n\Lambda_i$, where $\Lambda_i$ are defined as in Lemma \ref{lem1}. For any $i$, lattice $\Lambda_i$ has two generators which we denote by $\lambda_i^1$ and $\lambda_i^2$. This gives us following decomposition:
$$\Lambda'=\langle\lambda_i^1\ |\ 1\leq i\leq n\rangle\oplus\langle\lambda_i^2\ |\ 1\leq i\leq n\rangle,$$
where both summands are diagonal modules. Now observe that the group of 2-torsion points in $T_i=V_i/\Lambda_i$ is generated by $\frac{1}{2}\lambda_i^1 +\Lambda_i$ and $\frac{1}{2}\lambda_i^2 +\Lambda_i$. Without loss of generality we assume that $x_i^i=\frac{1}{2}\lambda_i^1 +\Lambda_i$ and $t_i= \frac{1}{2}\lambda_i^2 +\Lambda_i$. Since $\Lambda$ is generated by elements of $\Lambda'$ and by elements of $W=\ker \mu$, then we have decomposition:
$$\Lambda=\langle\lambda_i^1\ |\ 1\leq i\leq n\rangle\oplus\langle\lambda_i^2, w\ |\ 1\leq i\leq n, w\in W\rangle.$$
The first component is the diagonal module. The second component is diagonal over $\Q$ because the set of elements $\{\lambda_i^2\ :\ 1\leq i\leq n\}$ is its basis.
\end{proof}
First step in order to classify fundamental groups of CHW manifolds is to find all integral representations of odd degree $n$ (in fact, their images in $GL(n,\Z)$). It is possible to do it in terms of the subgroup $W$.
\begin{defn}
Let $G=\langle t_1,t_2,\ldots,t_n\rangle\simeq\Z_2^n$. We introduce equivalence relation on the set of subgroups of $G$: if $W_1<G$ and $W_2<G$ then $W_1\sim W_2 \iff \exists\ \sigma\in S_n$ such that the map $h_\sigma:G\to G$ defined by $h(t_i)=t_{\sigma i}$ restricts to an isomorphism from $W_1$ to $W_2$. 

Then we define:
$$\mathrm{Sub}(n)=\left\{W<G\ |\ \forall\ i\in\{1,2,\ldots,n\}\ t_i\not\in W\right\}/\sim$$\label{sub_n}
\end{defn}

\begin{prop}
Classes of subgroups in the set $\mathrm{Sub}(n)$ are in one-to-one correspondence with the conjugacy classes in $GL(n,\Z)$ of images of integral representations which are equivalent to the diagonal representation $\varrho_{diag}$ over~$\Q$.\label{prop-sub_n}\end{prop}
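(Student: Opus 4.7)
The plan is to set up a bijection between classes in $\mathrm{Sub}(n)$ and the desired conjugacy classes of images, passing through $\Z_2^{n-1}$-invariant lattices in $\Q^n$ equipped with the diagonal action $\varrho_{diag}$.

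First I would construct the map $W\mapsto H_W$. Given $W\subset G\cong\Z_2^n$ with $t_i\not\in W$, identify each $w=\sum_i w_it_i\in W$ (with $w_i\in\{0,1\}$) with the vector $\underline{w}=\sum_i w_i e_i\in\Z^n$, and set
$$L_W=\Z e_1+\cdots+\Z e_n+\sum_{w\in W}\Z\cdot\tfrac{1}{2}\underline{w}\subset\Q^n.$$
A short calculation shows that $(1-g_i)\tfrac{1}{2}\underline{w}\in\bigoplus_j\Z e_j$ for every generator $g_i$ of $\Z_2^{n-1}$ acting via $\varrho_{diag}$, so $L_W$ is $\Z_2^{n-1}$-invariant. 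Writing $\varrho_{diag}$ in any $\Z$-basis of $L_W$ yields a representation whose image $H_W\subset GL(n,\Z)$ is well defined up to $GL(n,\Z)$-conjugacy.

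Next I would show that every image arises this way. Let $\varrho$ be a representation $\Q$-equivalent to $\varrho_{diag}$, and realise its underlying module as an invariant sublattice $L\subset\Q^n$ for the action $\varrho_{diag}$. Applying $1+g_i$ to any $v\in L$ gives $2v_ie_i\in L\cap\Q e_i$, so $L\subset\tfrac{1}{2}\bigoplus_i L_i$, where $L_i=L\cap\Q e_i$; rescaling each axis so that $L_i=\Z e_i$ yields $L=L_W$ for $W=L/\bigoplus_i\Z e_i\subset\Z_2^n$. The normalization $L_i=\Z e_i$ forces $\tfrac{1}{2}e_i\not\in L$, which is exactly the condition $t_i\not\in W$.

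For the equivalence relations, I would use that the diagonal $\Z_2^{n-1}\subset SL_n(\Q)$ has the coordinate axes as its one-dimensional $1$-eigenspaces (one fixed axis per generator), so its normalizer in $GL(n,\Q)$ is the monomial group. Any $\gamma\in GL(n,\Z)$ conjugating $H_W$ onto $H_{W'}$ must normalize $\varrho_{diag}(\Z_2^{n-1})$ and send $L_W$ onto $L_{W'}$; the normalization $L_{W'}\cap\Q e_i=\Z e_i$ then forces $\gamma$ to be a signed permutation matrix. Since sign changes act trivially modulo $2$, $\gamma$ induces on $W\cong L_W/\bigoplus_i\Z e_i$ precisely the map $h_\sigma$ from Definition \ref{sub_n}, so $W\sim W'$. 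Conversely, given $\sigma\in S_n$ with $h_\sigma(W)=W'$, the permutation matrix $P_\sigma$ conjugates $H_W$ onto $H_{W'}$.

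The main obstacle is this last step: one has to rule out that conjugation of images in $GL(n,\Z)$ could identify more pairs $(W,W')$ than coordinate permutations do, since a priori such a $\gamma$ can compose with any outer automorphism of $\Z_2^{n-1}$. The key is that the only information about $\varrho_{diag}$ visible to $GL(n,\Q)$-conjugation is the unordered set of $n$ fixed coordinate axes, so the conjugating element is monomial and its induced action on $W\subset\Z_2^n$ is a coordinate permutation, matching exactly the $S_n$-action in Definition \ref{sub_n}.
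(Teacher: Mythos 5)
Your argument is correct and follows essentially the same route as the paper: the bijection is the correspondence $W\leftrightarrow L$ with $\Lambda=\bigoplus_i(L\cap\Q e_i)$ the maximal diagonal submodule and $W=L/\Lambda\subset\frac{1}{2}\Lambda/\Lambda$ (via the identity $\sum_i(1+g_i)=2$), and $GL(n,\Z)$-conjugacy of images is reduced to the $S_n$-action on subgroups of $\Z_2^n$ exactly as in Definition \ref{sub_n}. The one point where you are more explicit than the paper is the justification, via the normalizer of the diagonal group in $GL(n,\Q)$ being the monomial group (so that a conjugating element is a signed permutation after the normalization $L\cap\Q e_i=\Z e_i$), that conjugacy of images can only induce coordinate permutations on $W$ — a step the paper states with less detail.
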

\begin{proof}

Let $L$ be a $\Z_2^{n-1}$-module corresponding to the integral representation $\varrho:\Z_2^{n-1}\to GL(n,\Z)$, such that $L\otimes_{\Z}\Q\simeq L_{diag}\otimes_{\Z}\Q$. Fix a set $g_1,\ldots,g_{n-1}$ of generators of $\Z_2^{n-1}$ in a such way, that for any $g_i$ its eigenspace (over $\Q$) of $1$ is one-dimensional. Denote also $g_n=g_1g_2\ldots g_n$. Define:
$$L_i=\{l\in L\ :\ g_i(l)=l\}$$
for $i=1,2,\ldots,n$. Now $(1+g_i)(L)\subset L_i$ and thus $\Lambda=L_1\oplus L_2\oplus\ldots\oplus L_n\subset L$ is the maximal submodule isomorphic to the diagonal module. Define $W=L/\Lambda$. It can be seen as a subgroup of the quotient $\frac{1}{2}\Lambda/\Lambda$. To show that, we use an argument similar to that used in Lemma \ref{lem2}: for every $l\in L$ and for every $i=1,2,\ldots,n$ we have $(1+g_i)l\in L_i\subset \Lambda$, and $\displaystyle\sum_{i=1}^n(1+g_i)l=2l$. Moreover for $i=1,2,\ldots n$ let $t_i$ be a coset of $\frac{1}{2}l_i$, where $l_i$ is a generator of $L_i$. Then  any of the $t_i$ cannot be an element of $W$, because $\Lambda$ is maximal.

Assume that $L'$ is a module isomorphic to $L$ via isomorphism $\varphi:L\to L'$. Then since $\varphi$ restricts to isomorphism of the maximal diagonal submodules $\Lambda\subset L$ and $\Lambda'\subset L'$, we have $L/\Lambda=L'/\Lambda'$. On the other hand, if $W$ is a representative of a class $[W]\in\mathrm{Sub}(n)$ and $L_{diag}$ is a diagonal module, then there exists unique module $L\subset \frac{1}{2}L_{diag}$ such that $L/L_{diag}=W$. 

Assume that $L'$ is not neccessarily isomorphic to $L$, but the images of representations $\varrho$ and $\varrho'$ corresponding to $L$ and $L'$ are conjugate. Then there exists a permutation $\sigma\in S_n$ inducing an automorphism $f_{\sigma}$ of $\Z_2^{n-1}$ defined on generators by an equation $f_{\sigma}(g_i)=g_{\sigma(i)}$ such that $\varrho'\circ f_{\sigma}$ is isomorphic to $\varrho$. Thus $L'/\Lambda'$ and $L/\Lambda$ represent the same class in $\mathrm{Sub}(n)$. Conversely, if $W$ and $W'$ are representatives of the same class $[W]=[W']\in\mathrm{Sub}(n)$, then there exists $\sigma$ such that $\sigma W=W'$. Let $\varrho$ and $\varrho'$ be a representations associated to $W$ and $W'$. Changing the order of generators according to $\sigma$ we get an isomorphism of these representations, thus their images are conjugate.

\end{proof}
\begin{remark}
We can think of subgroups of $\Z_2^{n-1}$ as subspaces of the vector space $F_2^k$. Such subspaces are also known as binary linear codes. Recall that the Hamming distance of two elements of such a code is the number of positions where these elements differ. The distance of a linear code is the minimal Hamming distance between two different elements of this code. A linear isometry of codes is a linear map between codes that preserves the Hamming distance. Using this language, the set $\mathrm{Sub}(n)$ is the set of linear isometry classes of $(n,*,\geq 2)$ binary codes (where $*$ can be any number between $0$ and $n-1$). 

There is a notion of dual code (subspace of vectors orthogonal to a given code  with respect to the standard bilinear form). It is easy to see that a code dual to code of distance $\geq 2$ is a code that has no column of zeros in its generator matrix. Such codes were studied in \cite{slepian}. In particular, using notation from that paper, $|\mathrm{Sub}(n)|=\displaystyle\sum_{k=1}^{n-1}S_{nk}$. Further informations on enumeration and generation of such codes can be found in \cite{codes}.
\end{remark}

\begin{ex}
In dimension $3$ we have four classes in $\operatorname{Sub}(n)$:
\begin{enumerate}
\item trivial class, corresponding to the diagonal module,
\item two classes of order $2$ subgroups ($1$-dimensional codes):
\begin{itemize}
\item class of subgroup generated by $(1,1,0)$ (code of distance $2$),
\item class of subgroup generated by $(1,1,1)$ (code of distance $3$),
\end{itemize}
\item one class of order $4$ subgroup ($2$-dimensional code), which is generated by $(1,1,0)$ and $(0,1,1)$.
\end{enumerate}
For each of this classes there is only one manifold with corresponding holonomy representation. 
\end{ex}
\section{Fundamental groups}
Now we are in position to classify CHW manifolds up to diffeomorphism, in other words, to classify their fundamental groups. We will construct an algorithm which can be used to list all such Bieberbach groups. It can be also used to check if two CHW manifolds are diffeomorphic. The algorithm will be similar to the algorithm for HW manifolds obtained in \cite{miatello-rosetti}.

Assume that $M$ is a CHW manifold. According to Theorem \ref{structurethm}, we can associate to $M$ a collection of elements $x_i^j$, $t_i$ for $i,j=1,2,\ldots,n$ and a subgroup $W$ of group generated by elements $t_i$. Recall that for all $i\neq j$ an element $x_{ij}$ (defined in Theorem \ref{structurethm}) is an element of $W$. Thus for every $k=1,2,\ldots n$ we have $x_{ij}^k=2(x_i^k-x_j^k)\in\{0,t_k\}$. This allows us to introduce the following definition.
\begin{defn}
Let $M$ be a CHW manifold and let $\{x_i^j, t_i\ |\ 1\leq i,j\leq n\}$ be a set of elements associated to $M$ as in Theorem \ref{structurethm} (we use also notation $x_{ij}^k$ from that Theorem). Then we define $\Psi(M)$ as a binary $n\times n$ matrix with coefficients $\psi_{ij}$ given by:
$$\psi_{ij}=\begin{cases}
0&\textrm{ if }x_{ik}^j=0,\\
1&\textrm{ if }x_{ik}^j=t_j,\\
\end{cases}$$
where $$k=\begin{cases}1&\textrm{ if }j>1\\2&\textrm{ if }j=1.\end{cases}$$
\label{defpsi}
\end{defn}
\begin{prop}
Let $M$ be a CHW manifold of complex dimension $n$ together with a fixed set of elements $x_i^j$ and $t_j$, for $1\leq i,j\leq n$ and a subgroup $W\subset\langle t_1,\ldots,t_n\rangle$, defined as in the proof of Theorem \ref{structurethm}. Then the matrix $\Psi(M)$ has following properties:
\begin{enumerate}
\item $\psi_{ii}=0$ for $i=1,2,\ldots,n$,
\item $\psi_{21}=\psi_{12}=\psi_{13}=\ldots=\psi_{1n}=0$,
\item for all $i\neq j$ there exists an element $w=(w_1,\ldots,w_n)\in W$ such that $w_i=w_j=0$ and for any $k\not\in\{i,j\}$ we have $w_k=t_k \iff \psi_{ik}\neq\psi_{jk}$,  
\item sum of the coefficients in every column is $0 \pmod 2$.
\end{enumerate}
\label{psi-matrix-props}
\end{prop}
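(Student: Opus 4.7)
The plan is to derive properties (1)--(3) directly from the definitions and Theorem \ref{structurethm}, and to handle (4) by combining condition (iii) of that theorem with the 2-torsion structure of the relevant elements.

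For (1) and (2), I extend the formula $x_{ij}^k$ of Theorem \ref{structurethm} by the convention $x_{ii}^k := 0$. Then $\psi_{ii}$ is the bit of $x_{ik}^i$ with superscript equal to a subscript, hence vanishes; similarly $\psi_{1j}$ for $j \geq 2$ is the bit of $x_{11}^j = 0$ and $\psi_{21}$ is the bit of $x_{22}^1 = 0$.

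For (3), I take $w := x_{ij}$, which lies in $W$ by condition (i) of Theorem \ref{structurethm} and has $w_i = w_j = 0$ automatically. Letting $l = l(k) \in \{1,2\}$ denote the auxiliary index from Definition \ref{defpsi}, the identity
\[x_{ij}^k = 2(x_i^k - x_l^k) - 2(x_j^k - x_l^k) = x_{il}^k - x_{jl}^k,\]
interpreted inside the 2-torsion group $\{0, t_k\}$, yields $w_k = t_k \iff \psi_{ik} \neq \psi_{jk}$. The edge cases $l \in \{i, j\}$ collapse to the same conclusion via the conventions $x_{ii}^k = 0$ and $x_{ji}^k = -x_{ij}^k = x_{ij}^k$ (in 2-torsion).

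Property (4) is the main obstacle. The column $j$ sum reduces, after dropping the trivially zero summands for $\ell \in \{j, k\}$, to proving $\sum_{\ell \neq j, k} x_{\ell k}^j = 0$ inside $\{0, t_j\}$, where $k = k(j)$. My strategy is to rearrange the identity $x_n^j = x_{\{1,\ldots,n-1\}}^j$ from condition (iii) into a relation $\sum_{\ell=1}^n \varepsilon_\ell x_\ell^j = 0$ in $E_j$ with explicit signs $\varepsilon_\ell \in \{\pm 1\}$ depending on $\ell$ and $j$. A short parity count, crucially using that $n$ is odd, yields the key identity $\sum_{\ell \neq j} \varepsilon_\ell = 0$. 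Multiplying the relation by $2$ and using $2 x_j^j = 0$, then subtracting the vanishing term $(\sum_{\ell \neq j} \varepsilon_\ell) \cdot 2 x_k^j = 0$, gives
\[\sum_{\ell \neq j} \varepsilon_\ell \cdot 2(x_\ell^j - x_k^j) = 0.\]
Each summand equals $x_{\ell k}^j$, which lies in $\{0, t_j\}$ and is therefore 2-torsion; hence the signs $\varepsilon_\ell$ act trivially and the signed sum collapses to $\sum_{\ell \neq j} x_{\ell k}^j = 0$, which is exactly (4). The main technical difficulty is establishing the sign relation $\sum_{\ell \neq j} \varepsilon_\ell = 0$; without it, the second-to-last step does not close, and its validity relies essentially on $n$ being odd.
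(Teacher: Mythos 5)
Your proposal is correct and follows essentially the paper's (much terser) argument: (1)–(2) are definitional, (3) is exactly the paper's choice $w=x_{ij}$ via condition i) of Theorem \ref{structurethm}, and (4) is the natural unwinding of condition iii). The sign identity $\sum_{\ell\neq j}\varepsilon_\ell=0$ that you flag as the main difficulty does hold: splitting by the parity of $j$ (and treating $j=n$ separately), the alternating signs in the definition of $x_{\{1,\ldots,n-1\}}^{j}$ cancel precisely because $n-1$ is even, so your argument for (4) closes.
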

\begin{proof}
The first statement follows from Theorem \ref{structurethm}, as $x_i^i$ is a 2-torsion point. The second statement follows directly from the definition of the matrix $\Psi(M)$ and the definition of $x_{ij}^k$. The remaining statements are easily implied by conditions \ref{strthm-cond1}) (because we can take $w=x_{ij}$) and \ref{strthm-cond3}) of Theorem \ref{structurethm}. 
\end{proof}
\begin{cor}
If CHW manifold $M$ is a threefold, then $\Psi(M)$ is a zero matrix. If $M$ is any CHW manifold such that the associated subgroup $W$ has at most two elements (in other words, binary linear code corresponding to the holonomy representation is at most one-dimensional) then $\Psi(M)$ is a zero matrix.
\end{cor}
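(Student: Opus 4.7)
For the threefold case I would proceed by pure bookkeeping using Proposition~\ref{psi-matrix-props}. The matrix $\Psi(M)$ has nine entries; properties~(1) and~(2) immediately force six of them to vanish: the diagonal $\psi_{11}=\psi_{22}=\psi_{33}=0$ and the entries $\psi_{12}=\psi_{13}=\psi_{21}=0$. The remaining three entries $\psi_{23}$, $\psi_{31}$, $\psi_{32}$ each sit alone in their respective columns, so property~(4) (column sums are $0\pmod{2}$) pins each of them to $0$, giving $\Psi(M)=0$.

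For the case $|W|\le 2$ I would aim at the stronger claim that $x_{ij}=0$ for every $i\ne j$, which forces $\Psi(M)=0$ directly from Definition~\ref{defpsi}. The subcase $|W|=1$ is immediate, so write $W=\{0,w^0\}$ with $w^0\ne 0$ and let $S=\{k:w^0_k=t_k\}$ be the support of $w^0$, so $S\ne\emptyset$. If one of $i,j$ lies in $S$---say $i\in S$---then $w^0_i=t_i\ne 0$ while $x_{ij}^i=0$ by definition, so $x_{ij}\ne w^0$ and hence $x_{ij}=0$.

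The only remaining subcase is $i,j\in S^c$. Pick any $a\in S$; then $a\notin\{i,j\}$ and the previous step gives $x_{ai}=x_{aj}=0$. Property~(3) applied to the pairs $(a,i)$ and $(a,j)$ yields $\psi_{ak}=\psi_{ik}$ and $\psi_{ak}=\psi_{jk}$ for all admissible $k$, and combining these gives $\psi_{ik}=\psi_{jk}$ for every $k\notin\{a,i,j\}$. A third application of property~(3), now to the pair $(i,j)$, converts this into $x_{ij}^k=0$ for all such $k$. Together with $x_{ij}^i=x_{ij}^j=0$ this shows that $x_{ij}\in W$ has at most one nonzero coordinate, namely at position $a$, taking value in $\{0,t_a\}$. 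The main obstacle, and the step where the hypothesis $t_a\notin W$ is essential, is to rule out $x_{ij}^a=t_a$: were this the case, $x_{ij}$ would equal $t_a$, contradicting the definition of $W$ from Theorem~\ref{structurethm}. Hence $x_{ij}^a=0$ and $x_{ij}=0$.
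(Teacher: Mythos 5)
Your proof is correct. The threefold case is exactly the paper's argument: properties (1) and (2) of Proposition~\ref{psi-matrix-props} kill six entries, and property (4) kills the lone remaining entry in each column. For the case $|W|\le 2$ you take a genuinely different route. The paper stays entirely at the level of the matrix $\Psi$: it fixes a coordinate $i$ in the support of the nonzero element of $W$, uses property (3) to show that all off-diagonal entries of every column $k\ne i$ coincide, then invokes the parity of column sums (property (4), together with $n$ odd) and the zeros guaranteed by property (2) to make those columns vanish, and finally uses a second supported coordinate (which exists because $t_i\notin W$) to kill the $i$-th column too. You instead prove the stronger-looking statement that every element $x_{ij}$ vanishes, using only the structure-theorem constraints $x_{ij}\in W$ and $t_k\notin W$ together with property (3), and then read off $\Psi(M)=0$ from Definition~\ref{defpsi}; this bypasses the parity argument and the use of property (4) altogether, and isolates more clearly where $t_a\notin W$ enters. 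One small point to make explicit: property (3) is stated as an existence assertion, so your third application --- concluding $x_{ij}^k=0$ rather than merely that \emph{some} witness vanishes --- implicitly uses the identity $x_{ij}^k=t_k \iff \psi_{ik}\neq\psi_{jk}$, i.e.\ that $x_{ij}$ itself realizes the witness; this is precisely how the paper proves property (3) (``we can take $w=x_{ij}$'') and is a one-line computation with 2-torsion points, so it is not a gap, but it deserves a sentence. (Incidentally, by the same identity $\Psi(M)=0$ is in fact equivalent to the vanishing of all $x_{ij}$, so your conclusion is not strictly stronger, just packaged differently.)
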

\begin{proof}
From the two first statements of the Proposition \ref{psi-matrix-props}, if $n=3$ then $\Psi(M)$ has only three coefficients: $\psi_{23}, \psi_{31}$ and $\psi_{32}$ which could be non-zero. However as there is only one such an element in every column of $\Psi(M)$, from the last statement of Proposition \ref{psi-matrix-props} we conclude that $\Psi(M)=0$.

Assume now that $W$ has at most 2 elements. If it has only $1$ element (that is $W$ is trivial) then obviously $\Psi(M)=0$. So assume that $|W|=2$ and let $w=(w_1,\ldots,w_n)$ be the non-zero element of $W$. Let $i$ be an index such that $w_i\neq 0$. Then if we take any $j\neq i$ and use third statement from the Proposition \ref{psi-matrix-props}, then we get that for any $k\not\in\{i,j\}$ we have $\psi_{ik}=\psi_{jk}$ (as the only one element of $W$ with $0$ on the $i$-th coordinate is the trivial element). This shows that for any $k\neq i$, in $k$-th column of matrix $\Psi(M)$ all elements but the $i$-th and the $k$-th are the same. The $k$-th element in the $k$-th column is always $0$ (this is the first statement in Proposition \ref{psi-matrix-props}). Since $n$ is an odd number and all elements in every column of $\Psi$ sum up to $0$, then all elements in every column (but the $i$-th) which are not on the diagonal are equal. From the second statement of Proposition \ref{psi-matrix-props} we see that these columns consist only of zeros. Since we could find another number $l\neq i$ such that $w_l\neq 0$, then the $i$-th column has the same property and $\Psi$ is a zero matrix.
\end{proof}

Now we shall define second matrix associated to a CHW manifold. We will denote by $\Z_2[\tau]$ an additive group on elements $\{0,1,\tau,1+\tau\}$ isomorphic to the Klein 4-group. If $M$ is a CHW manifold and $E_1,\ldots,E_n$ are elliptic curves defined as in Lemma \ref{lem1}, then for each $i=1,2,\ldots,n$ we define isomorphisms $\kappa_i$ from the group $E_i(2)$ of $2$-torsion points of $E_i$ to the group $\Z_2[\tau]$ by the values on generators:
$$\kappa_i(a)=\begin{cases}1&\textrm{ if }a=x_i^i\\\tau&\textrm{ if }a=t_i\end{cases}$$

\begin{defn}Let $M$ be a CHW manifold of complex dimension $n$ with associated collection of objects $\{x_i^j,t_i\ |\ 1\leq i,j\leq n\}$ as in Theorem \ref{structurethm}. Let $\Psi(M)$ be the matrix defined in the Definition \ref{defpsi}. Then $\Phi(M)$ is a $n\times n$ matrix with coefficients $\phi_{ij}\in\Z_2[\tau]$ defined as follows:
$$\phi_{ij}=\begin{cases}
1&\textrm{ if }i=j,\\
\kappa_j(x_i^j-x_k^j)&\textrm{ if }i\neq j,
\end{cases}$$
where $k\in\{1,2,\ldots,n\}$ is the smallest number such that $k\neq j$ and $\psi_{kj}=\psi_{ij}$.\label{defphi}
\end{defn}

Observe, that since $\psi_{kj}=\psi_{ij}$, then $2(x_i^j-x_k^j)=2(x_i^j-x_l^j)-2(x_k^j-x_l^j)=0$, where $l=1$ for $j>1$ and $l=2$ for $j=1$. Thus matrix $\Phi(M)$ is well defined.

\begin{prop}
Let $M$ be a CHW manifold of complex dimension $n$ together with a fixed set of elements $x_i^j$ and $t_j$, for $1\leq i,j\leq n$. Then the matrix $\Phi(M)$ has following properties:
\begin{enumerate}
\item $\phi_{21}=\phi_{12}=\phi_{13}=\ldots=\phi_{1n}=0$,
\item if $k_j$ is the smallest index such that $\psi_{k_jj}=1$ then $\phi_{k_jj}=0$,
\item for all $j\in\{1,2,\ldots,n\}$ sum of the elements of the $j$-th column of $\Phi(M)$ is given by the equation:
$$\sum_{i=1}^n \phi_{ij}=\frac{1}{2}\left(\sum_{i=1}^{j-1}(-1)^{i-1}\psi_{ij}-\sum_{i=j+1}^{n}(-1)^{i-1}\psi_{ij}\right)\tau.$$
\end{enumerate}\label{phi-matrix-props}
\end{prop}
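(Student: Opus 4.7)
The plan is to dispatch properties 1 and 2 directly from Definition \ref{defphi}, then to prove property 3 by computing the column sum via condition \ref{strthm-cond3}) of Theorem \ref{structurethm}. For property 1, I would observe that $\psi_{1j}=0$ for every $j\geq 2$ (by property 2 of Proposition \ref{psi-matrix-props}); hence the smallest index $k\neq j$ with $\psi_{kj}=\psi_{1j}=0$ appearing in Definition \ref{defphi} is $k=1$ itself, giving $\phi_{1j}=\kappa_j(x_1^j-x_1^j)=0$. The case $\phi_{21}$ is the same argument with $j=1$ (base index $k=2$). Property 2 is equally immediate: $k_j$ is by construction the smallest $k$ with $\psi_{kj}=1$, so the reference index in the formula for $\phi_{k_jj}$ equals $k_j$ and the value is $0$.

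For property 3, I introduce $l_0$ for the base reference ($l_0=1$ if $j\geq 2$, $l_0=2$ if $j=1$) and $l_1$ for the smallest $k\neq j$ with $\psi_{kj}=1$ (when such exists), and write $A_j=\{i:\psi_{ij}=0\}$, $B_j=\{i:\psi_{ij}=1\}$, $a=|A_j|$, $b=|B_j|$. Two facts drive the whole argument. First, the defining relation $2(x_i^j-x_{l_0}^j)=\psi_{ij}t_j$ evaluated at $i=j$ gives $2x_{l_0}^j=2x_j^j=0$, so $x_{l_0}^j\in E_j(2)$ and consequently $2x_i^j=\psi_{ij}t_j$ for every $i$. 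Second, $b$ is even by property 4 of Proposition \ref{psi-matrix-props}, and since $n=a+b$ is odd, $a-1$ is also even.

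Now I apply condition iii) of Theorem \ref{structurethm}: substituting the expression for $x_n^j$ into $\sum_{i=1}^n x_i^j$, the alternating-sign cancellations leave only the odd-indexed $x_k^j$ with $k\leq j$ and the even-indexed $x_k^j$ with $j<k<n$, each appearing doubled. Using $2x_k^j=\psi_{kj}t_j$ this yields
$$\sum_{i=1}^n x_i^j = c_j\,t_j,\qquad c_j:=\sum_{\substack{k\text{ odd}\\k\leq j}}\psi_{kj}+\sum_{\substack{k\text{ even}\\j<k<n}}\psi_{kj}.$$
Setting $T_j:=\sum_{i\in A_j\setminus\{j\}}(x_i^j-x_{l_0}^j)+\sum_{i\in B_j}(x_i^j-x_{l_1}^j)$, each summand is $2$-torsion, so $\kappa_j(T_j)=\sum_{i\neq j}\phi_{ij}$, and a rearrangement gives
$$T_j=\sum_{i=1}^n x_i^j-x_j^j-(a-1)x_{l_0}^j-b\,x_{l_1}^j.$$
The term $(a-1)x_{l_0}^j$ vanishes because $a-1$ is even and $x_{l_0}^j\in E_j(2)$, while $b\,x_{l_1}^j=(b/2)\cdot 2x_{l_1}^j=(b/2)t_j$. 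Therefore $T_j=(c_j-b/2)t_j-x_j^j$, and since $\kappa_j(x_j^j)=1$ and $\kappa_j(t_j)=\tau$,
$$\sum_{i=1}^n\phi_{ij}=1+\kappa_j(T_j)=(c_j-b/2)\tau=\tfrac{1}{2}(2c_j-b)\tau.$$

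To finish, I would verify by a routine partition of $b=\sum_k\psi_{kj}$ according to the parity of $k$ and whether $k\leq j$ that
$$2c_j-b=\sum_{k=1}^{j-1}(-1)^{k-1}\psi_{kj}-\sum_{k=j+1}^{n}(-1)^{k-1}\psi_{kj},$$
matching the claimed formula. The main obstacle I expect is isolating the identity $\sum_i x_i^j=c_j t_j$ that condition iii) forces; once this and the $2$-torsion of $x_{l_0}^j$ are in hand, the remainder is bookkeeping in $\Z_2[\tau]$.
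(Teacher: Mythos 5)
Your handling of properties 1 and 2 is correct and is exactly the paper's ``directly from the definition'' argument. Your strategy for property 3 (feed condition iii) of Theorem \ref{structurethm} into the column sum and do parity bookkeeping in $\Z_2[\tau]$) is also the paper's strategy, but the execution has a genuine flaw: your ``first fact'' is false. The relation $2(x_i^j-x_{l_0}^j)=\psi_{ij}t_j$ comes from the entry $x_{il_0}^j$ of $x_{il_0}\in W$, and by condition i) of Theorem \ref{structurethm} that entry is set to $0$ \emph{by definition} when the superscript equals one of the two subscripts; so ``evaluating at $i=j$'' is vacuous and does not give $2(x_j^j-x_{l_0}^j)=0$. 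Indeed $x_{l_0}^j$ (and more generally any off-diagonal $x_i^j$) need not be a $2$-torsion point: only the diagonal entries $x_i^i$ are constrained to lie in $E_i(2)$, and conjugating $\Gamma$ by a translation $(Id,a)$ replaces $x_i^j$ by $x_i^j+2a_j$ for $j\neq i$ while preserving all the data of Theorem \ref{structurethm}, so $x_1^j$ can be an arbitrary point of $E_j$. Consequently your claims $2x_i^j=\psi_{ij}t_j$, $\sum_i x_i^j=c_jt_j$, $(a-1)x_{l_0}^j=0$ and $b\,x_{l_1}^j=(b/2)t_j$ are all unjustified and false in general.

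Your final identity $T_j=(c_j-b/2)t_j-x_j^j$ happens to be true anyway, but only because the spurious multiples of $x_{l_0}^j$ cancel: redoing your computation with the correct relations $2x_k^j=\psi_{kj}t_j+2x_{l_0}^j$ ($k\neq j$) and $2x_j^j=0$, the total coefficient of $x_{l_0}^j$ is $2\cdot\tfrac{n-1}{2}-(a-1)-b=n-a-b=0$. You never verify this cancellation, so as written the proof of property 3 rests on a false lemma. The clean repair is the paper's route: substitute $x_i^j=(x_i^j-x_{l_0}^j)+x_{l_0}^j$ directly into the \emph{alternating} relation coming from condition iii); the coefficient of $x_{l_0}^j$ there is the alternating sum $\sum_{i<j}(-1)^{i-1}-\sum_{i>j}(-1)^{i-1}$, which vanishes for parity reasons, and then each difference $x_i^j-x_{l_0}^j$ equals $\kappa_j^{-1}(\phi_{ij})+\psi_{ij}(x_{l_1}^j-x_{l_0}^j)$ with $2(x_{l_1}^j-x_{l_0}^j)=t_j$, after which your bookkeeping identity $2c_j-b=\sum_{i<j}(-1)^{i-1}\psi_{ij}-\sum_{i>j}(-1)^{i-1}\psi_{ij}$ (which is correct) finishes the argument without ever needing $x_{l_0}^j$ to be $2$-torsion.
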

\begin{proof}
The first two properties follows directly from the definition of~$\Phi(M)$. The last property is implied by the condition \ref{strthm-cond3}) of the Theorem~\ref{structurethm}. Indeed, we have:
\begin{equation*}\begin{split}0&=\sum_{i=1}^{j-1}(-1)^{i-1}x_i^j+x_i^i-\sum_{i=j+1}^{n}(-1)^{i-1}x_i^j\\
&=x_i^i+\sum_{i=1}^{j-1}(-1)^{i-1}(x_i^j-x_l^j)-\sum_{i=j+1}^{n}(-1)^{i-1}(x_i^j-x_l^j),\end{split}
\end{equation*}
where $l=2$ if $j=1$ and $l=1$ otherwise. Let $k$ be the minimal index such that $\psi_{kj}=1$. If $i\neq j$, then $(x_i^j-x_l^j)-\psi_{ij}(x_k^j-x_l^j)=\kappa_j^{-1}(\phi_{ij})$. We have also that $x_j^j=\kappa_j^{-1}(\phi_{jj})$. Thus we get:
$$\left(\sum_{i=1}^{j-1}(-1)^{j-1}\psi_{ij}-\sum_{i=j+1}^{n}(-1)^{j-1}\psi_{ij}\right)(x_l^j-x_k^j)=\sum_{i=1}^n \kappa_j^{-1}\phi_{ij}.$$
But from definition of $\Psi(M)$, equality $\psi_{kj}=1$ means that $2(x_k^j-x_l^j)=t_j=\kappa_j^{-1}(\tau)$. Because $\kappa_j$ is an isomorphism, we get our result. 
\end{proof}

\begin{defn}
Let $n$ be an odd number, and $W$ be a representative of a class $[W]\in\textrm{Sub}(n)$. Let
$$\mathcal{M}\subset\textrm{Mat}_{n\times n}(\Z_2[\tau])\times \textrm{Mat}_{n\times n}(\Z_2)$$
be a subset consisting of pairs of matrices that satisfy the conditions from Proposition \ref{psi-matrix-props} and Proposition \ref{phi-matrix-props}. We say that two pairs $(\Phi,\Psi),(\Phi',\Psi')\in\mathcal{M}$ are equivalent, if $(\Phi,\Psi)$ can be transformed to $(\Phi',\Psi')$ by a finite number of  operations:
\begin{enumerate}
\item change pair $(\Phi,\Psi)$ to $(\sigma\Phi,\sigma\Psi)$ for any $\sigma\in S(W)= \{\sigma \in S_n: \sigma(W) = W\}$,
\item add a vector $\tau w$ to one of the rows of $\Phi$ (but not to the last row), where $w=(w_1,\ldots,w_n)\in W\subset \Z_2^n$ and $\tau w=(\tau w_1,\ldots,\tau w_n)$,
\item apply to the $k$-th column of matrix $\Phi$ the bijection $\gamma$ of $\Z_2[\tau]$ which is a transposition of elements $\tau$ and $1+\tau$, where $k\in\{1,2,\ldots,n\}$ is any number such that for any $w=(w_1,\ldots,w_n)\in W$ we have $w_k=0$,
\end{enumerate}
where each of the operations is followed by a normalization (if neccessary):
\begin{enumerate}
\item change all coefficients $\psi_{ij}$ of a transformed matrix $\Psi$ such that $i\neq j$ to $\psi_{ij}-\psi_{lj}$, where $l=1$ for $j>1$ and $l=2$ for $j=1$,
\item change all coefficients $\phi_{ij}$ of a transformed matrix $\Phi$ such that $i\neq j$ to $\phi_{ij}-\phi_{kj}$, where $k$ is the smallest number such that $k\neq j$ and $\psi_{ij}=\psi_{kj}$,
\item change elements $\phi_{nj}$ for $j<n$ and element $\phi_{n-1\ n}$ so that the sum of every column of $\Phi$ is equal to $\frac{1}{2}\left(\sum_{i=1}^{j-1}(-1)^{i-1}\psi_{ij}-\sum_{i=j+1}^{n}(-1)^{i-1}\psi_{ij}\right)\tau$,
\item if any element $\phi_{ii}$ on the diagonal of transformed matrix $\Phi$ is not equal to $1$, then apply to all elements of the $i$-th column of $\Phi$ the bijection $\delta$ of the $\Z_2[\tau]$ which is a transposition of $1$ and $1+\tau$.
\end{enumerate}\label{equivpairs}
\end{defn}
\begin{remark}
Observe, that the only operation which affects matrix $\Psi$ is the first one. After this operation it may be necessary to use the first three normalizations (of which only the first one may change the matrix $\Psi$). Thus the first operation together with the first normalization defines an action of the group $S(W)$ on the set of possible matrices $\Psi$. For a fixed matrix $\Psi$ we define a group $S(\Psi)$ as a stabilizer of $\Psi$ under this action.\label{S_psi}
\end{remark}
\begin{thm}
Let $M, M'$ be a CHW manifolds of complex dimension $n$ with isomorphic integral holonomy representations corresponding to $W\subset \Z_2^n$. Assume that $\Phi(M)$, $\Psi(M)$, $\Phi'(M)$ and $\Psi'(M)$ are matrices associated to $M$ and $M'$ respectively. Then $M$ is diffeomorphic to $M'$ if and only if the pairs $(\Phi(M),\Psi(M))$ and $(\Phi(M'),\Psi(M'))$ are equivalent (in the sense of Definition~\ref{equivpairs}).\label{equivthm} 
\end{thm}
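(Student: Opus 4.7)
The plan is to identify the generating operations of Definition~\ref{equivpairs} with the choices implicit in the construction $M\mapsto (\Phi(M),\Psi(M))$. Starting from a CHW manifold $M$, the data $\{E_i, t_i, W, x_i^j\}$ of Theorem~\ref{structurethm}, and hence the matrices $\Phi(M)$ and $\Psi(M)$, depend on three kinds of choices: (a) the labeling of the one-dimensional holonomy eigenspaces $V_1,\ldots,V_n$ (equivalently, an ordering of the generators $g_1,\ldots,g_n$); (b) for each $i$, the lift of the translation part $x_i\in T$ to an element of $\bigoplus_j E_j$, with two lifts differing by an element of $\ker\mu=W$; and (c) for each $i$, a $\Z$-basis $\{\lambda_i^1,\lambda_i^2\}$ of the sublattice $\Lambda_i\subset V_i$, needed to define $\kappa_i$ and thereby to encode $2$-torsion data as coefficients in $\Z_2[\tau]$.

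For the ``if'' direction I would check that each generating operation realizes one of these choices, and so alters $(\Phi(M),\Psi(M))$ without changing the isomorphism class of $\pi_1(M)$. Operation~1 is (a); the constraint $\sigma(W)=W$ is needed so that the permuted data is still built on the same representative $W$ of its class in $\mathrm{Sub}(n)$. Operation~2 implements (b): shifting the $i$-th lift by $w\in W$ adds $w_j\in\{0,t_j\}$ to each $x_i^j$ and hence $\tau w_j$ to $\phi_{ij}$; the last row is excluded because condition~iii) of Theorem~\ref{structurethm} forces $x_n$ to be determined by $x_1,\ldots,x_{n-1}$, and the induced shift in row $n$ and in $\phi_{n-1,n}$ is precisely the content of normalization~3. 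Operation~3 realizes (c) in the particular case when $w_k=0$ for every $w\in W$: then $t_k$ is unconstrained and the substitution $\lambda_k^2\leftrightarrow \lambda_k^1+\lambda_k^2$ post-composes $\kappa_k$ with the transposition $\tau\leftrightarrow 1+\tau$. The other basis change $\lambda_i^1\leftrightarrow \lambda_i^1+\lambda_i^2$, which always preserves $t_i$, accounts for normalization~4, and is invoked whenever operation~2 has moved $\phi_{ii}$ away from $1$.

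For the ``only if'' direction, suppose $M\simeq M'$. By the second Bieberbach theorem their fundamental groups are conjugate in $A(n)=GL_n(\R)\ltimes\R^n$ by some affine map $\alpha$. The linear part of $\alpha$ intertwines the integral holonomy representations, hence permutes the one-dimensional eigenspaces by some $\sigma\in S_n$; the assumption that $M$ and $M'$ give the same class in $\mathrm{Sub}(n)$ forces $\sigma(W)=W$, and after applying operation~1 I may assume $\sigma=\mathrm{id}$ and that $\alpha$ preserves the decomposition into eigenspaces. What remains of $\alpha$ splits as a translation, which changes the lifts of the $x_i$ (operation~2), together with a $\Z$-basis change on each $\Lambda_i$ that acts on $E_i(2)$ through the quotient $GL_2(\mathbb{F}_2)\simeq S_3$ permuting the three nonzero $2$-torsion points.

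The expected main obstacle is to control this $S_3$-action in tandem with the interaction of operations and normalizations. When $w_i=0$ for all $w\in W$ the full $S_3$ is available and must be shown to be generated by the transpositions $\gamma$ and $\delta$ (a standard fact about $S_3$), whereas when some $w\in W$ has $w_i\neq 0$ only basis changes fixing $t_i$ are allowed and one has to check that the corresponding stabilizer is generated by $\delta$ alone. The remaining work is careful combinatorial bookkeeping: verifying that each operation is compatible with the subsequent normalizations, that the changes to the last row forced by condition~iii) are correctly recovered after row operations on rows $1,\ldots,n-1$, and that the four normalizations together suffice to bring any two data sets related by the operations into identical matrices.
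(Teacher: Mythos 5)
Your dictionary between the operations of Definition~\ref{equivpairs} and the choices made in Theorem~\ref{structurethm} (ordering of the eigenspaces, lifts of the translation parts modulo $W=\ker\mu$, relabeling of the $2$-torsion points of each $E_i$) is essentially the paper's, and your ``only if'' direction (second Bieberbach theorem, splitting the conjugating affine map into a linear part, which permutes the curves and is constrained by $\sigma(W)=W$, and a translational part, which changes the lifts) follows the same route as the paper. The genuine gap is in the ``if'' direction. What you propose to prove there is only soundness of the operations: if a pair is associated to $M$, then so is any pair obtained from it by the operations. After transporting $(\Phi(M),\Psi(M))$ to $(\Phi(M'),\Psi(M'))$ you therefore know that $M$ and $M'$ admit data with \emph{identical} matrices, but you still need the completeness statement that identical pairs (with the same $W$) force $M$ and $M'$ to be diffeomorphic --- and this does not follow from your bookkeeping, because the matrices do not determine the data. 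They record neither the complex moduli of the curves $E_i$ (one must argue that, since the holonomy representation is fixed, the lattices may be normalized so that $\Lambda=\Lambda'$ up to affine equivalence) nor the actual points $x_i^j$ in the rows with $\psi_{ij}=1$: there the difference $x_i^j-x_{l_j}^j$ (with $l_j=1$ for $j>1$, $l_1=2$) is a point of order $4$, a ``half'' of $t_j$, and all four possibilities $\pm\frac{1}{2}t_j$, $x_j^j\pm\frac{1}{2}t_j$ produce the same matrix entries.

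The paper devotes the first and longest part of its proof to exactly this reconstruction: after assuming $\Lambda=\Lambda'$ and conjugating $\Gamma$ by a translation to match the reference values $x_{l_j}^j$, it exhibits explicit matrices $A_1,A_2,A_3\in SL_2(\Z)$, congruent to the identity mod $2$, whose induced diffeomorphisms of $E_j$ fix all $2$-torsion points and act transitively on the four candidate values of $x_{k_j}^j-x_{l_j}^j$, where $k_j$ is the smallest index with $\psi_{k_jj}=1$; only then can one conclude that equal matrices give equal (hence diffeomorphic) manifolds. Your proposal never confronts this order-$4$ ambiguity --- your analysis of lattice basis changes only tracks their action on the $2$-torsion subgroup $E_i(2)$ --- so the ``if'' direction is not closed. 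The remaining parts of your outline (soundness of operations 1--3 with the normalizations, and the case distinction that when some $w\in W$ has $w_i\neq 0$ only the swap $\delta$ fixing $t_i$ is available, while otherwise $\gamma$ and $\delta$ generate all relabelings) do match the paper's argument.
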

\begin{proof}
At the beginning of the proof we shall show that a pair of matrices $(\Phi(M),\Psi(M))$ determines a diffeomorphism type of $M$. Assume that $M$ and $M'$ are CHW manifolds of complex dimension $n$ (with the same holonomy representation corresponding to $W$) to which we associated the same matrices, $\Phi(M)=\Phi(M')$ and $\Psi(M)=\Psi(M')$. These matrices were defined using some objects, which we will denote $x_i^j, t_j\in E_j$ (for manifold $M$) and $(x_i^j)', (t_j)'\in (E_j)'$ (for manifold $M'$), where $1\leq i,j\leq n$. Let $M=V/\Gamma$ and $M'=V/\Gamma'$, where $\Gamma$ and $\Gamma'$ are the fundamental groups of $M$ and $M'$. Since the holonomy representation is fixed, then without loss of generality we can assume that $\Lambda=\Lambda'$, where $\Lambda\subset\Gamma$ and $\Lambda'\subset\Gamma'$ are the maximal lattices. Moreover it follows that for any $i=1,2,\ldots,n$ we have $E_i=(E_i)'$. Up to diffeomorphism we may also assume, that $x_i^i=(x_i^i)'$ and $t_i=(t_i)'$, for all $i=1,2,\ldots,n$. Let $l_j=1$ if $j>1$, $l_1=2$. We will show that without any loss of generality we may assume that $x_{l_j}^j=(x_{l_j}^j)''$. Indeed, if it is not the case, take $a\in V$ such that the coset of $a$ in $T=V/\Lambda$ is equal to
$$\frac{1}{2}\mu\left((x_{l_1}^1)'-x_{l_1}^1,\ldots,(x_{l_n}^n)'-x_{l_n}^n\right).$$
Let $\Gamma''=\Gamma^{(Id,a)}$. Then $V/\Gamma''$ is diffeomorphic to $M$. Observe, that if $(g_i,x_i)\in\Gamma$ is an element defined as in the proof of Theorem \ref{structurethm}, then $(Id,a)(g_i,x_i)(Id,-a) = (g_i,x_i+(id-g_i)a)\in\Gamma''$. Thus we can choose new elements $(x_i^j)''$ in the following way:
$$(x_i^j)''=\begin{cases}x_i^i&i=j\\x_i^j+(x_{l_j}^j)'-x_{l_j}^j&i\neq j\end{cases}$$
From the definition we see that $\Psi(M)=\Psi(M'')$ and $\Phi(M)=\Phi(M'')$. Moreover $(x_{l_j}^j)''=(x_{l_j}^j)'$.

Now it follows from definition of $\Phi(M)$ that if $\psi_{ij}=0$, then $x_i^j=(x_i^j)'$. To end this part of the proof, we need to show that we can assume $x_{k_j}^j=(x_{k_j}^j)'$, where $k_j$ is the smallest number such that $\psi_{k_jj}=1$. We use the fact that $x_{k_jl_j}^j=2(x_{k_j}^j-x_{l_j}^j)=t_j$. Thus the difference $x_{k_j}^j-x_{l_j}^j$ can be equal $\pm \frac{1}{2}t_j$ or $x_j^j\pm\frac{1}{2}t_j$. Consider the matrices:
$$A_1=\begin{bmatrix}3&4\\2&3\end{bmatrix},\ 
A_2=\begin{bmatrix}1&2\\0&1\end{bmatrix},\ 
A_3=\begin{bmatrix}3&2\\4&3\end{bmatrix}.$$
Lattice $\Lambda_j$ has a basis $a,b$ such that $\frac{1}{2}a+\Lambda=x_j^j$ and $\frac{1}{2}b+\Lambda=t_j$. Using this basis we can see that each of the above defined matrices induces a diffeomorphism of the elliptic curve $E_j$ to itself. These diffeomorphisms fix all $2$-torsion points. Moreover for any two points $x,y$ from the set $\{\frac{1}{2}t_j$,  $-\frac{1}{2}t_j$, $x_j^j+\frac{1}{2}t_j, x_j^j+\frac{1}{2}t_j\}$ we can find $i$ such that diffeomorphism induced by $A_i$ maps $x$ to $y$. Thus up to diffeomorphism we can assume that $x_i^j=(x_i^j)'$ for any $i,j\in\{1,2,\ldots,n\}$, but then $M=M'$.

In the second part of the proof we shall check that any of operations introduced in Definition \ref{equivpairs} does not change the diffeomorphism type of $M$. 
\begin{enumerate}
\item Observe that the first operation corresponds to the change of ordering of elliptic curves $E_i$ according to the permutation $\sigma$. The first two normalizations may be needed since the matrices $\Phi$ and $\Psi$ are defined using the differences of the elements $x_i^j$. 
\item The second operation changes the values $x_i^j$. However it does not change $\mu(x_1^j,\ldots,x_n^j)$ since $W=\ker\mu$. This means that under this operation the fundamental group $\Gamma$ is unchanged. Here it may be necessary to use all but the first normalization. We will need the second normalization for the same reason as above. Third normalization must be applied if we add to the $i$-th row of $\Phi(M)$ a vector $\tau w$ such that $w_i\neq 0$. In this case the element $x_i^i$ would be changed to $x_i^i+t_i$, but then we would rename the 2-torsion points of $E_i$ (see the proof of Theorem \ref{structurethm}). This renaming is precisely the third normalization. Observe also that adding a vector to a single row of $\Phi(M)$ corresponds to a different choice of elements $x_i^j$, and this implies also that elements $x_n^j$ would be chosen differently (which follows from the condition \ref{strthm-cond3}) of the Theorem \ref{structurethm}). 
\item Now we will consider the remaining operation. Let $k$ be a number such that for any $w\in W$ we have $w_k=0$. Then there are two $2$-torsion points which could be named $t_k$ equally well and the third operation corresponds to the different choice of this element. Clearly here we do not need any normalization.
\end{enumerate}

The coefficients of $\Psi(M)$ and $\Phi(M)$ are determined by the points $x_i^j$ (and implicitly by $t_j$). These points are not uniquely determined though. In the next step we shall show that different choice of points $x_i^j$ gives rise to equivalent pair of matrices. There are two cases. First assume, that $k$ is a number such that there exists $w\in W$ with $w_k\neq 0$. This determines the choice of an element $t_k$. However, since the $n$-tuple $(x_k^1,x_k^2,\ldots,x_k^n)$ is determined up to elements of $\ker\mu$, then there are two such $n$-tuples  which differ on the $k$-th place by $t_k$. This means that there are two possible choices of $x_k^k$. Transition between them can be done precisely by the second operation (adding a vector to a row of $\Phi$) together with the fourth normalization. In the second case, for all $w\in W$ we have $w_k=0$. Then the point $x_k^k$ is uniquely determined, but $t_k$ could be equally well substituted by $t_k+x^k_k$. This corresponds to the third operation from Definition \ref{equivpairs}.

Now we need to check that if $M=V/\Gamma$ and $M'=V/\Gamma'$ are diffeomorphic CHW manifolds and if $(\Psi(M),\Phi(M)),(\Psi(M'),\Phi(M'))$ are pairs of matrices associated to them, then these pairs are equivalent. From the Bieberbach Theorem, there is an element $(A,a)\in A(2n)$ such that $\Gamma'=\Gamma^{(A,a)}$. Since  it can be decomposed to the linear and translational part, then it is enough to check two cases separately: conjugation by translation and conjugation by linear map. 
\begin{itemize}
\item If $a=0$, then it follows from direct calculation that $\Lambda'=A\Lambda$. Moreover since the holonomy representations are isomorphic, then $A$ induce an isomorphism $\phi_A$ of elliptic curves $E_i$ and $E_i'$. Thus we can assume that elements $(x_i^j)'$ for $M'$ are chosen as $\phi_A(x_i^j)$ and that $t_j'=\phi_A(t_j)$. But this means that $(\Phi(M),\Psi(M))$ is a pair that could be associated to $M'$ as well. Thus $(\Phi(M'),  \Psi(M'))$ is equivalent to $(\Phi(M),\Psi(M))$.
\item Assume that $A=Id$. Since $(g_i,x_i)^{(Id,a)}=(g_i,x_i+(Id-g_i)a)$, then directly from the definition of matrices $\Phi$ and $\Psi$ we see, that also in this case a pair $(\Phi(M),\Psi(M))$ could be associated to $M'$.
\end{itemize}
\end{proof}

The theorem above can be used to check whether two CHW manifolds are diffeomorphic in terms of matrices $\Phi(M)$ and $\Psi(M)$. However, in order to classify all diffeomorphism types of $n$-dimensional CHW manifolds for a given odd number $n$, we need to know which pairs of matrices from the set $\mathcal{M}$ (defined in the Definiton \ref{equivpairs}) correspond to a CHW manifold. For this reason we introduce following condition.
\begin{defn}
Let $n$ be an odd number and $(\Phi,\Psi)\in \mathcal{M}$ (where the set $\mathcal{M}$ is defined as in the Definition \ref{equivpairs}). Let $\phi_{ij}$ and $\psi_{ij}$ denote the coefficients of matrices $\Phi$ and $\Psi$ respectively. We say that $(\Phi,\Psi)$ satisfies the torsion-free condition, if for any $I=\{i_1,\ldots,i_m\}\subset\{1,2,\ldots,n\}$ of odd size $m$ there exists $j=i_p\in I$ such that:
$$\psi_{Ij}\textrm{ is odd or }\sum_{k=1}^m\phi_{i_kj}\neq \frac{1}{2}\psi_{Ij}\tau,$$
where $\displaystyle\psi_{Ij}\in\Z$ is a number defined by equation: $$\psi_{Ij}=\sum_{k=1}^{p-1}(-1)^{k-1}\psi_{i_kj}-\sum_{k=p+1}^m(-1)^{k-1} \psi_{i_kj}.$$\label{torsionfreecond}
\end{defn}
\begin{thm}
Let $n$ be an odd number and $(\Phi,\Psi)\in\mathcal{M}$. Then the following conditions are equivalent:
\begin{enumerate}
\item there exists a complex Hantzsche-Wendt manifold $M$ such that $\Phi=\Phi(M)$ and $\Psi=\Psi(M)$,
\item all pairs $(\Phi',\Psi')\in\mathcal{M}$ equivalent to $(\Phi,\Psi)$ satisfy the torsion-free condition.
\end{enumerate}\label{torsionfreethm}
\end{thm}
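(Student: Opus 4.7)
The plan is to establish a dictionary that converts condition ii) of Theorem \ref{structurethm} into the matrix torsion-free condition of Definition \ref{torsionfreecond}, and then exploit this dictionary together with the ``change of data'' flexibility from the proof of Theorem \ref{equivthm}. Specifically, for any odd-sized $I=\{i_1,\ldots,i_m\}\subseteq\{1,\ldots,n\}$ and $j=i_p\in I$, I will compute $x_I^j$ via the identities $\kappa_j^{-1}(\phi_{ij})=x_i^j-x_{c(i,j)}^j$ (where $c(i,j)$ is the minimal index $k$ with $\psi_{kj}=\psi_{ij}$) and $2(x_a^j-x_b^j)=(\psi_{aj}-\psi_{bj})t_j$. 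A careful sign analysis of the alternating sum defining $x_I^j$ yields $2x_I^j=\psi_{Ij}t_j$, and when $\psi_{Ij}$ is even the Klein-4 identification gives $\kappa_j(x_I^j)=\sum_{k=1}^m\phi_{i_kj}+\frac{1}{2}\psi_{Ij}\tau$. Hence the negation of TFC amounts to saying that for every $j\in I$ the 2-torsion point $x_I^j$ equals a prescribed element of $\{0,t_j\}$, and by property 3 of Proposition \ref{psi-matrix-props} this prescribed tuple coincides with the $I$-coordinates of some $y\in W$, which is precisely the negation of condition ii) of Theorem \ref{structurethm}.

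For direction (2)$\Rightarrow$(1), I apply TFC to $(\Phi,\Psi)$ itself. I choose elliptic curves $E_i$ with lattice bases $\lambda_i^1,\lambda_i^2$ of $\Lambda_i$, set $x_i^i=\frac{1}{2}\lambda_i^1+\Lambda_i$ and $t_i=\frac{1}{2}\lambda_i^2+\Lambda_i$, and define $x_i^j$ for $i\neq j$ by inverting Definition \ref{defphi} after arbitrary choices of $x_{l_j}^j$ and $x_{k_j}^j$. Condition i) of Theorem \ref{structurethm} follows from property 3 of Proposition \ref{psi-matrix-props}, condition iii) follows from property 3 of Proposition \ref{phi-matrix-props}, and condition ii) follows from TFC via the dictionary; Theorem \ref{structurethm} then produces a CHW manifold $M$ with $(\Phi(M),\Psi(M))=(\Phi,\Psi)$. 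For the converse (1)$\Rightarrow$(2), the second half of the proof of Theorem \ref{equivthm} shows that every pair $(\Phi',\Psi')$ equivalent to $(\Phi,\Psi)$ can be realized as $(\Phi(M),\Psi(M))$ for the same $M$ under an alternative legitimate choice of the data $x_i^j,t_j$; torsion-freeness of $\pi_1(M)$ forces condition ii) of Theorem \ref{structurethm} on this data, and the dictionary transfers it to TFC for $(\Phi',\Psi')$.

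The hard part will be the sign and normalization bookkeeping in establishing the dictionary: the reference points $x_{c(i,j)}^j$ that define $\phi_{ij}$ depend on $\psi$-values, the alternating sum defining $x_I^j$ switches sign at the position $p$ where $j=i_p$, and the edge case $I=\{1,\ldots,n\}$ (included in TFC but excluded from condition ii) of Theorem \ref{structurethm}) must be treated separately by verifying, via condition iii) of Theorem \ref{structurethm} together with property 3 of Proposition \ref{phi-matrix-props}, that $x_I^j=0$ for $I=\{1,\ldots,n\}$, so that TFC holds automatically for the full set.
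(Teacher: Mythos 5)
Your dictionary is correct as far as it goes: reconstructing the points from $(\Phi,\Psi)$ one indeed gets $x_I^j=\kappa_j^{-1}\bigl(\sum_{k=1}^m\phi_{i_kj}\bigr)+\psi_{Ij}(x_{k_j}^j-x_{l_j}^j)$, so for $\psi_{Ij}$ even $\kappa_j(x_I^j)=\sum_{k}\phi_{i_kj}+\frac{1}{2}\psi_{Ij}\tau$. But the equivalence you then draw is wrong: the negation of the torsion-free condition for the \emph{single} pair $(\Phi,\Psi)$ says precisely that $x_I^j=0$ for all $j\in I$ (your ``prescribed element of $\{0,t_j\}$'' is always $0$), so it detects the failure of condition ii) of Theorem \ref{structurethm} only for $y=0$, not for a general $y\in W$. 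Consequently, in the direction (2)$\Rightarrow$(1), where you explicitly ``apply TFC to $(\Phi,\Psi)$ itself,'' your construction only rules out fixed points coming from the trivial element of $W$ and never verifies condition ii) against nonzero $w\in W$; in particular your argument never uses the hypothesis that \emph{all equivalent} pairs satisfy the condition, which is exactly what the theorem's formulation is there for. The paper closes this gap as follows: assuming $x_I^j=w_j$ for all $j\in I$ with some $w\in W$, it passes to the equivalent pair obtained by adding $\tau w$ to one row with index in $I$ and one with index outside $I$, and then checks---this is the $\alpha,\beta,\gamma$ parity bookkeeping---that none of the four normalizations of Definition \ref{equivpairs} can restore the torsion-free condition, contradicting hypothesis (2). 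Without this step (or a proof that TFC for one pair forces it for all equivalent pairs, which fails for nontrivial $W$, as the paper itself indicates in the dimension-7 estimate), the implication (2)$\Rightarrow$(1) is not established.

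A secondary point: your treatment of the edge case $I=\{1,\ldots,n\}$ is inverted. By property 3 of Proposition \ref{phi-matrix-props} and property 4 of Proposition \ref{psi-matrix-props}, for the full index set one always has $\psi_{Ij}$ even and $\sum_{i=1}^n\phi_{ij}=\frac{1}{2}\psi_{Ij}\tau$ for every $j$, so by your own dictionary $x_I^j=0$ for all $j$; hence the torsion-free condition would \emph{fail} for this $I$, not ``hold automatically.'' The correct reading is that the condition ranges over proper subsets only, matching condition ii) of Theorem \ref{structurethm} (for the full set $f_1\cdots f_n$ is a lattice translation, so no fixed-point-freeness is required of it), rather than being automatically satisfied there.
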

\begin{proof}
First assume, that $M$ is a CHW manifold such that $\Phi=\Phi(M)$ and $\Psi=\Psi(M)$. Then, by Theorem \ref{equivthm}, any equivalent matrices  $\Phi',\Psi'$ are of the form  $\Phi'=\Phi(M')$ and $\Psi'=\Psi(M')$ for some manifold $M'$ diffeomorphic to $M$ (and for some choice of points $(x_i^j)'$ and $t_j'$). Thus it is enough to show that the pair $(\Phi,\Psi)$ satisfies the torsion-free condition. Let $x_i^j$ and $t_j$ be points chosen as in Theorem \ref{structurethm}. From condition \ref{strthm-cond2}) of this Theorem we know, that for any proper subset $I\subset\{1,2,\ldots,n\}$ of odd size and any $w=(w_1,\ldots,w_n)\in W$ we have $x_I^j\neq w_j$. In particular we can take $w=0$ and thus $x_I^j\neq 0$. But:
$$x_I^j=\kappa_j^{-1}\left(\sum_{k=1}^m\phi_{i_kj}\right)+\psi_{Ij}(x_{kj}-x_{lj})$$
(by direct calculation similar to the one used in the proof of Proposition \ref{phi-matrix-props}). If $\psi_{Ij}$ is even, then we get
$$0\neq\kappa_j^{-1}\left(\sum_{k=1}^m\phi_{i_kj}\right)+\frac{1}{2}\psi_{Ij}\kappa_j^{-1}\left(2(x_{kj}-x_{lj})\right),$$
and thus the torsion-free condition is satisfied.

Now we assume that all equivalent pairs to $(\Phi,\Psi)$ satisfy the torsion-free condition. To construct a CHW manifold $M$ we use matrices $\Phi$ and $\Psi$. Precisely, we take any elliptic curves $E_1,E_2,\ldots,E_n$, and for any $i  \in\{1,2,\ldots,n\}$ we choose two different 2-torsion points $x_i^i,t_i\in E_i$. Then by Theorem \ref{structurethm} we need also a group $W$ which is the subgroup to which the matrices $(\Phi,\Psi)$ are associated, and points $x_i^j$ which we obtain in following way:
$$x_i^j=\kappa_j(\phi_{ij})+\psi_{ij}y_j,$$
where $y_j\in E_j$ is a point such that $2y_j=t_j$ (which we may choose arbitrarily but only once). Conditions \ref{strthm-cond1} and \ref{strthm-cond3} of Theorem \ref{structurethm} are then easily satisfied from the properties of matrices $\Psi$ and $\Phi$. The only thing we need to prove is that the condition \ref{strthm-cond2} is also satisfied. Assume the contrary, that there exists $w=(w_1,w_2,\ldots,w_n)$ and a subset $I\subset\{1,2,\ldots n\}$ of odd size such that for every $j\in I$ we have $x_I^j=w_j$. Observe that in particular $2x_I^j=0$ and thus $\psi_{Ij}$ cannot be odd. We can get a new pair $(\Phi',\Psi')$ equivalent to $(\Phi,\Psi)$ by adding $w$ to $k$-th and $l$-th row of $\Phi$, where $k\in I$ and $l\not\in I$. First observe, that before doing normalization, we obtain a matrix which does not satisfy the torsion-free condition -- it follows from the assumption that condition \ref{strthm-cond2} of Theorem \ref{structurethm} is not satisfied. Now we will check, that after normalizations the torsion-free condition still is not satisfied. Let $\tilde{\Phi}$ denote matrix $\Phi$ with $w$ added to the $k$-th and $l$-th row, without normalizations and let $\tilde{\phi}_{ij}$ denote its $(i,j)$-coefficient. Fix $j\in I$. Let $\alpha,\beta$ and $\gamma$ be a numbers of coefficients $\tilde{\phi}_{ij}$ equal to $1$, $\tau$ and $1+\tau$ respectively, where $i\in I$. Since the torsion-free condition is not satisfied for the subset $I$, we have that $\alpha+\gamma$ and $\beta+\gamma+\frac{1}{2}\psi_{Ij}$ are even numbers. Let us examine all types of normalizations.
\begin{enumerate}
\item The first normalization operates on $\Psi$ which is unchanged.
\item The second normalization adds $\tau$ to some even numbers of elements, because $I$ is odd, we do not change elements on the diagonal and $\psi_{Ij}$ is even. Thus the parity of $\alpha+\gamma$ and $\beta+\gamma+\frac{1}{2}\psi_{Ij}$ is unchanged.
\item Since $w$ has been added to two rows, then the sum of the coefficients in every column remains the same. It follows that the third type of normalization is not necessary.
\item The last type of normalization swaps numbers $\alpha$ and $\gamma$. It is enough to observe that $\alpha+\gamma+\frac{1}{2}\psi_{Ij}$ is also an even number.
\end{enumerate}
We have arrived to a contradiction. Thus elements $x_i^j$ satisfy all conditions of Theorem \ref{structurethm} and we conclude that $M$ is a CHW manifold such that $\Phi(M)=\Phi$ and $\Psi(M)=\Psi$.
\end{proof}
\section{Quantitative results}
Theorems \ref{equivthm} and \ref{torsionfreethm} can be used to classify fundamental groups of complex Hantzsche-Wendt manifolds of given dimension $n$. We sum it up in the following steps:
\begin{enumerate}
\item Find all classes $[W]\in \mathrm{Sub}(n)$ (see Definition \ref{sub_n}).
\item For every class $[W]$:
\begin{enumerate}
\item choose a representative $W$ of a class $[W]$,
\item calculate the stabilizer $S(W)=\{\sigma\in S_n\ |\ \sigma W=W\}$,
\item find all possible matrices $\Psi$ (see Proposition \ref{psi-matrix-props}),
\item find the orbits of the action of $S(W)$ on the set of matrices $\Psi$ and choose a representative in every orbit,
\item for the representative $\Psi$ of every orbit:
\begin{enumerate}
\item calculate $S(\Psi)\subset S(W)$ (see Remark \ref{S_psi}),
\item find all possible matrices $\Phi$ (see Proposition \ref{phi-matrix-props}),
\item find the equivalence classes of pairs $(\Phi,\Psi)$ according to the relation introduced in Definition \ref{equivpairs} (here $\Psi$ is fixed so we use only permutations from $S(\Psi)$),
\item find those classes in which any pair $(\Phi,\Psi)$ satisfy the torsion-free condition (see Definition \ref{torsionfreecond}).
\end{enumerate}
\end{enumerate}
\end{enumerate}

As we have seen, there are four classes in $\mathrm{Sub}(3)$. Moreover the matrix $\Psi$ is a zero-matrix and there is unique matrix $\Phi$ satisfying the conditions of Proposition \ref{phi-matrix-props}. This confirms that there are four CHW threefolds, with distinct integral holonomy representations (\cite{kahlerflat}).

The set $\mathrm{Sub}(5)$ consists of 16 classes. Using a computer program based on our method, we can list all fundamental groups of CHW manifolds of complex dimension $5$. The total number of CHW fivefolds is equal to 8616. More detailed results are listed in Table \ref{5dimtable}. 

In the case of dimension $7$ and the diagonal module, we can use our algorithm to find a lower bound on number of fundamental groups. However, this is probably very rough estimate. 
\begin{prop}
The number of fundamental groups of $7$-dimensional CHW manifolds with diagonal holonomy representation exceeds $48321790784$.
\end{prop}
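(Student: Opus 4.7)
The plan is to specialize the enumeration algorithm implicit in Theorems \ref{equivthm} and \ref{torsionfreethm} to $n = 7$ and $W = \{0\} \subset \Z_2^7$, which by Proposition \ref{prop-sub_n} corresponds exactly to the diagonal integral holonomy representation. By the corollary following Proposition \ref{psi-matrix-props} the matrix $\Psi(M)$ must then vanish, so the diffeomorphism type of $M$ is determined by the equivalence class of a single matrix $\Phi(M) \in \textrm{Mat}_{7 \times 7}(\Z_2[\tau])$, and bounding the number of classes reduces to three tasks: (a) enumerating admissible matrices $\Phi$; (b) bounding the size of each equivalence class; and (c) restricting to those $\Phi$ that satisfy the torsion-free condition.

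For (a), Proposition \ref{phi-matrix-props} with $\Psi = 0$ imposes $\phi_{ii} = 1$, $\phi_{21} = 0$, $\phi_{1j} = 0$ for $j \geq 2$, and vanishing column sums in $\Z_2[\tau]$. Each of the seven columns then contributes five free entries in the four-element group $\Z_2[\tau]$ subject to one linear constraint, giving $4^4 = 256$ choices per column and $256^7 = 2^{56}$ admissible matrices in total. For (b), Definition \ref{equivpairs} simplifies dramatically when $W$ is trivial: operation 2 is vacuous; operation 1 is the $S(W) = S_7$ action on indices together with the deterministic normalization 2 restoring the canonical form; and operation 3 permits independent swaps $\tau \leftrightarrow 1+\tau$ on each of the seven columns. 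A short check shows that normalizations 3 and 4 never fire (permutations and $\gamma$ preserve $\phi_{ii} = 1$, and a direct calculation in the Klein four-group shows they preserve column sums), so the effective symmetry group $G$ is a subgroup of $S_7 \ltimes (\Z_2)^7$, of order at most $|S_7| \cdot 2^7 = 645120$, and each equivalence class contains at most this many admissible matrices.

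For (c), Theorem \ref{torsionfreethm} guarantees that the torsion-free condition is equivalence-invariant, and with $\Psi = 0$ it reduces to: for every proper odd-sized subset $I \subsetneq \{1, \ldots, 7\}$ with $|I| \in \{3, 5\}$, at least one column sum of the principal submatrix $\Phi|_{I \times I}$ is nonzero in $\Z_2[\tau]$. Either a careful inclusion-exclusion over the $\binom{7}{3} + \binom{7}{5} = 56$ failure events, or a direct computer enumeration, establishes that the number $N$ of torsion-free admissible matrices satisfies $N > 645120 \cdot 48\,321\,790\,784$; dividing $N$ by $|G|$ then gives the desired lower bound. The principal obstacle is this last step, since the $56$ events "$\Phi|_{I \times I}$ has all vanishing column sums" are intricately correlated across overlapping $I$, making a clean closed-form inclusion-exclusion combinatorially delicate. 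In practice one likely proceeds by exhibiting a sufficiently large subfamily of admissible matrices that manifestly meet every torsion-free constraint (for instance, matrices whose columns contain enough generic entries to force nonzero principal-submatrix column sums) and bounding its cardinality below, thereby sidestepping the exact inclusion-exclusion while still securing the stated numerical bound.
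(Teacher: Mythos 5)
Your setup in steps (a) and (b) matches the paper's argument: with $W=\{0\}$ the matrix $\Psi$ vanishes, the admissible matrices $\Phi$ are parametrized by $28$ free entries in $\Z_2[\tau]$ (your per-column count $4^4$ over seven columns is the paper's $4^{28}=2^{56}$), and the relevant symmetry group is $S_7$ acting by operation 1 together with $\Z_2^7$ acting by operation 3, of order $7!\cdot 2^7=645120$, so the number of orbits is at least $N/645120$ where $N$ is the number of admissible matrices satisfying the torsion-free condition. (One small inaccuracy: Theorem \ref{torsionfreethm} does not by itself say that the torsion-free condition is equivalence-invariant; what it says is that a pair corresponds to a manifold iff \emph{all} equivalent pairs satisfy the condition. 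The invariance in the case $W=0$ is true but needs the short separate check the paper makes by inspecting the normalizations, essentially the verification you sketch when arguing that normalizations 3 and 4 never fire.)

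The genuine gap is step (c): the quantitative heart of the proposition, namely a lower bound on $N$, is never established. You defer it to "a careful inclusion-exclusion \ldots or a direct computer enumeration," declare the correlations between the $56$ failure events an obstacle, and then propose, without carrying it out, to exhibit some large subfamily of manifestly torsion-free matrices. None of this is needed, and none of it is done. Since only a \emph{lower} bound is required, a plain union bound suffices and is exactly what the paper uses: with $\Psi=0$, failure of the torsion-free condition at a subset $I$ means the $|I|$ column sums $\sum_{i\in I}\phi_{ij}$, $j\in I$, all vanish, i.e.\ the free coefficients satisfy $|I|$ independent linear conditions over $\Z_2[\tau]$, so at most $4^{28-|I|}$ matrices fail at $I$. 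Summing over the $\binom{7}{3}=35$ triples and $\binom{7}{5}=21$ quintuples gives at most $35\cdot 4^{25}+21\cdot 4^{23}$ bad matrices, hence
$$N\;\geq\;4^{28}-35\cdot 4^{25}-21\cdot 4^{23}\;=\;2^{46}\cdot 443,$$
and dividing by $7!\cdot 2^7$ yields $2^{35}\cdot 443/315>48321790784$. Without this (or some equally explicit) estimate your argument does not prove the stated number; as written it only reduces the proposition to an unverified counting claim, and it even miscasts that claim as harder than it is.
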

\begin{proof}
Integral holonomy representation of CHW manifold is diagonal if and only if the associated subspace $W$ is trivial. This implies that $\Psi$ is zero matrix. Equivalence of pairs $(\Phi,\Psi)$ reduces to an action of the group $S_7\times\Z_2^7$ on the set of matrices $\Phi$ (where $S_7$ acts via the first kind and $\Z_2^7$ via the third kind of operations from Definition \ref{equivpairs}). Since $\Phi$ has always $1$ on the diagonal, $0$ in the top non-diagonal entry and the sum in the bottom non-diagonal entry, then the set of possible matrices $\Phi$ is parametrized by values of the remaining $28$ coefficients, which have values in $\Z_2[\tau]$. Some of the matrices from that set do not satisfy the torsion-free condition. However, looking at the proof of Theorem \ref{equivthm}, we see that in our case if a matrix $\Phi$ satisfy the torsion-free condition, then it is also satisfied by all equivalent matrices. 

We will estimate how many possible matrices do not satisfy the torsion-free condition. We have 35 subsets of $\{1,2,\ldots,7\}$ of cardinality $3$ and 21 subsets of cardinality $5$. The torsion-free condition is not satisfied for the subset $I$ when the coefficients of $\Phi$ satisfy a set of $|I|$ linearly independent linear equations. Thus there are at most $35\cdot 4^{28-3}+28\cdot 4^{28-5}$ matrices for which the torsion-free condition is not satisfied. As a consequence the number of matrices $\Phi$ satisfying the torsion-free condition is greater than $4^{28}-35\cdot 4^{25}+28\cdot 4^{23}=2^{46}\cdot 443$. As the order of the group acting on those matrices is equal to $7!\cdot 2^7$, the number of its orbits exceeds $2^{46}\cdot 443/(7!\cdot 2^7)=2^{35}\cdot 443/315$.
\end{proof}

\begin{table}
\caption{Numbers of fundamental groups of CHW fivefolds.}
\vspace{1cm}
\begin{tabular}[!h]{|c|c|r|}
\hline
Generators of $W$ & $\Psi$ & Number\\
\hline
$(0,0,0,0,0)$& 0& 667\\\hline
$(1,1,0,0,0)$& 0& 1551\\\hline
$(1,1,1,0,0)$& 0& 1716\\\hline
$(1,1,1,1,0)$& 0& 1290\\\hline
$(1,1,1,1,1)$& 0& 420\\\hline
\multirow{2}{*}{$\left(\begin{array}{ccccc}
1&1&0&0&0\\1&0&1&0&0
\end{array}\right)$}&0&283\\\cline{2-3}
&$\Psi_1$&82\\\hline
$\left(\begin{array}{ccccc}
1&1&0&0&0\\0&0&1&1&0
\end{array}\right)$&0&516\\\hline
$\left(\begin{array}{ccccc}
1&1&0&0&0\\1&0&1&1&0
\end{array}\right)$&0&691\\\hline
$\left(\begin{array}{ccccc}
1&1&0&0&0\\0&0&1&1&1
\end{array}\right)$&0&381\\\hline
$\left(\begin{array}{ccccc}
1&1&0&0&0\\1&0&1&1&1
\end{array}\right)$&0&340\\\hline
$\left(\begin{array}{ccccc}
1&1&1&0&0\\1&0&0&1&1
\end{array}\right)$&0&362\\\hline
\multirow{3}{*}{$\left(\begin{array}{ccccc}
1&1&0&0&0\\1&0&1&0&0\\1&0&0&1&0
\end{array}\right)$}&0&34\\\cline{2-3}&$\Psi_1$&42\\\cline{2-3}&$\Psi_2$&25\\\hline
\multirow{3}{*}{$\left(\begin{array}{ccccc}
1&1&0&0&0\\1&0&1&0&0\\0&0&0&1&1
\end{array}\right)$}&0&56\\[0.5em]\cline{2-3}&$\Psi_1$&28\\[-0.5em]&&\\\hline
\multirow{3}{*}{$\left(\begin{array}{ccccc}
1&1&0&0&0\\1&0&1&0&0\\0&0&1&1&1
\end{array}\right)$}&0&41\\[0.5em]\cline{2-3}&$\Psi_1$&21\\[-0.5em]&&\\\hline
\multirow{3}{*}{$\left(\begin{array}{ccccc}
1&1&0&0&0\\0&0&1&1&0\\1&0&1&0&1
\end{array}\right)$}&0&38\\[0.5em]\cline{2-3}&$\Psi_3$&9\\[-0.5em]&&\\\hline
\multirow{4}{*}{$\left(\begin{array}{ccccc}
1&1&0&0&0\\1&0&1&0&0\\1&0&0&1&0 \\1&0&0&0&1
\end{array}\right)$}&0&2\\\cline{2-3}&$\Psi_1$&9\\\cline{2-3}&$\Psi_2$&9\\\cline{2-3}&$\Psi_4$&4\\\hline

\end{tabular}
\qquad\qquad
\begin{tabular}{c}
$\Psi_1= \begin{bmatrix}0&0&0&0&0\\0&0&0&0&0\\0&0&0&0&0\\1&1&1&0&0\\1&1&1&0&0\end{bmatrix}$
\\$ $ \\
$\Psi_2=\begin{bmatrix}0&0&0&0&0\\0&0&0&0&0\\1&1&0&1&0\\1&1&1&0&0\\0&0&1&1&0\end{bmatrix}$
\\$ $ \\
$\Psi_3 = \begin{bmatrix}0&0&0&0&0\\0&0&0&0&0\\1&1&0&1&1\\1&1&1&0&1\\0&0&1&1&0\end{bmatrix}$
\\$ $ \\
$\Psi_4 = \begin{bmatrix}0&0&0&0&0\\0&0&1&1&0\\0&1&0&0&1\\1&0&1&0&1\\1&1&0&1&0\end{bmatrix}$
\end{tabular}
\label{5dimtable}
\end{table}
%\newpage

\bibliographystyle{plain}

\end{document}